\numberwithin{equation}{section}
\renewcommand{\nabla}{D}
\newcommand{\tr}{\mathrm{Tr}}
\newcommand{\bbR}{\mathbb{R}}
\definecolor{darkblue}{rgb}{0,0,0.7}
\newtheorem{theorem}{Theorem}[section]
\newtheorem{lemma}[theorem]{Lemma}
\newtheorem{remark}[theorem]{Remark}
\newtheorem{proposition}[theorem]{Proposition}
\newtheorem{properties}[theorem]{Properties}
\begin{document}

\begin{frontmatter}
\title{Analysis of the 3DVAR Filter for the\\
Partially Observed Lorenz '63 Model}

\begin{aug}


\author{K. J. H. Law\ead[label=e1]{k.j.h.law@warwick.ac.uk}},  
  \author{A. Shukla\ead[label=e2]{a.shukla@warwick.ac.uk}}
	\and
 \author{A. M. Stuart  \ead[label=e3]{a.m.stuart@warwick.ac.uk}}
  \address{Warwick Mathematics Institute\\
    University of Warwick\\ CV4 7AL, UK\\ 
           \printead{e1,e2,e3}}

\end{aug}

\maketitle
\begin{abstract}
The problem of effectively combining data with a 
mathematical model constitutes a major challenge
in applied mathematics. It is particular
challenging for high-dimensional dynamical systems
where data is received sequentially in time and
the objective is to estimate the system state
in an on-line fashion; this situation arises, for
example, in weather forecasting.
The sequential particle filter 
is then impractical and {\em ad hoc} filters, which
employ some form of Gaussian approximation, are
widely used. Prototypical of these {\em ad hoc} filters
is the 3DVAR method. The goal of this paper is
to analyze the 3DVAR
method, using the Lorenz '63 model to exemplify the
key ideas. The situation where the data is partial 
and noisy is studied, and both discrete time and
continuous time data streams are considered. The
theory demonstrates how the widely used technique
of variance inflation acts to stabilize the filter,
and hence leads to asymptotic accuracy. 

\end{abstract}
\end{frontmatter}

\maketitle

\section{Introduction}

Data assimilation concerns estimation of the state
of a dynamical system by combining observed data with the 
underlying mathematical model. It finds widespread
application in the geophysical sciences, including
meteorology \cite{kalnay2003atmospheric}, oceanography
\cite{ben02} and oil reservoir 
simulation \cite{oliver2008inverse}. 
Both filtering methods, which update the state sequentially,
and variational methods, which can use an entire
time window of data, are used \cite{apte2008data}. However,
the dimensions of the systems arising in the applications
of interest are enormous  -- of ${\cal O}(10^9)$ in global
weather forecasting, for example. This makes
rigorous Bayesian approaches such as the sequential particle 
filter \cite{doucet2001sequential}, for the filtering
problem, or MCMC methods for the variational problem 
\cite{article:Stuart2010}, prohibitively expensive in
on-line scenarios. 

For this reason various
{\em ad hoc} methodologies are typically used.
In the context of filtering these usually rely on
making some form of Gaussian ansatz \cite{VL09}. The 3DVAR
method \cite{article:Lorenc1986,article:Parrish1992}
is the simplest Gaussian filter, relying on
fixed (with respect to the data time-index increment)
forecast and analysis model covariances, related through
a Kalman update. A more sophisticated idea is to
update the forecast covariance via the linearized
dynamics, again computing the analysis covariance
via a Kalman update, leading to the extended Kalman filter
\cite{jazwinski1970stochastic}. In high dimensions 
computing the full linearized dynamics is not practical.
For this reason the ensemble Kalman filter 
\cite{evensen2003ensemble,evensen2009data} is widely
used, in which the forecast covariance is estimated from
an ensemble of particles, and each particle is updated
in Kalman fashion. An active current area of research
in filtering concerns the development of methods
which retain the computational expediency of approximate
Gaussian filters, but which incorporate physically
motivated structure into the forecast and analysis steps
\cite{majda2010mathematical,majda2012filtering}, and
are non-Gaussian.

Despite the widespread use of these
many variants on approximate Gaussian filters,
systematic mathematical analysis 
remains in its infancy. Because the 3DVAR method is
prototypical of other more sophisticated {\em ad hoc}
filters it is natural to develop a thorough understanding
of the mathematical properties of this filter.
Two recent papers address these issues in the
context of the Navier-Stokes equation, for
data streams which are discrete in time \cite{brett2012accuracy}
and continuous in time \cite{blomker2012accuracy}. 
These papers study the situation where
the observations are partial (only low frequency spatial
information is observed) and subject to small noise.
Conditions are established under which the filter can recover
from an order one initial error and, after enough time
has elapsed, estimate the entire system state
to within an accuracy level determined by the
observational noise scale; this is termed {\em filter accuracy.}
Key to understanding, and proving,
these results on the 3DVAR filter
for the Navier-Stokes equation are a pair of papers
by Titi and co-workers which study the synchronization
of the Navier-Stokes equation with a true signal
which is fed into only the low frequency spatial modes of the system, without
noise \cite{olson2003determining,hayden2011discrete}; the
higher modes then synchronize because of the underlying 
dynamics.  The idea that a finite amount of
information effectively governs the large-time
behaviour of the Navier-Stokes equation goes back to
early studies of the equation as a dynamical system
\cite{foias1967comportement} and is known as the 
{\em determining node} or {\em mode} property 
in the modern literature \cite{book:Robinson2001}. 
The papers \cite{brett2012accuracy, blomker2012accuracy}
demonstrate that the technique of {\em variance inflation}, 
widely employed by practitioners in high dimensional
filtering, can be understood
as a method to add greater weight to the data, thereby
allowing the synchronization effect to take hold.

The Lorenz '63 model \cite{lorenz1963deterministic,colin1982}
provides a useful metaphor for various aspects of the
Navier-Stokes equation, being dissipative with a
quadratic energy-conserving nonlinearity \cite{Titi2001}.
In particular, the Lorenz model exhibits a form of
synchronization analogous to that mentioned above for
the Navier-Stokes equation \cite{hayden2011discrete}.
This strongly suggests that results proved for 3DVAR
applied to the Navier-Stokes equation will have analogies
for the Lorenz equations. The purpose of this paper is
to substantiate this assertion.

The presentation is organized as follows.
In section \ref{sec:2} we describe the Bayesian formulation
of the inverse problem of sequential data assimilation; we 
also present a brief introduction to the relevant properties
of the Lorenz '63 model and describe the 3DVAR filtering schemes
for both discrete and continuous time data streams. 
In section \ref{sec:dt} we derive Theorem \ref{91} concerning
the 3DVAR algorithm applied to the Lorenz model with
discrete time data. This is analogous to Theorem 3.3
in \cite{brett2012accuracy} for the Navier-Stokes
equation. However, in contrast to that paper, we study
Gaussian (and hence unbounded) observational noise
and, as a consequence, our results are proved in mean
square rather than almost surely. In section \ref{sec:ct}
we extend the accuracy result to the continuous time data
stream setting: Theorem \ref{t:55}; the result
is analogous to Theorem 4.3 in \cite{blomker2012accuracy}
which concerns the Navier-Stokes equation.
Section \ref{sec:num} contains numerical results 
which illustrate the theory.
We make concluding remarks in section \ref{sec:conc}.

\section{Set-Up}
\label{sec:2}

In subsection \ref{ssec:ip} 
we formulate the probabilistic inverse
problem which arises from attempting to estimate the
state of a dynamical system subject to uncertain
initial condition, and given partial, noisy
observations. Subsection \ref{ssec:2.1} introduces
the Lorenz '63 model which we employ throughout
this paper. In subsections \ref{ssec:2.2} and
\ref{ssec:2.3} we describe the discrete and continuous 
3DVAR filters whose properties we study in
subsequent sections.

\subsection{Inverse Problem}
\label{ssec:ip}
Consider a model whose dynamics is governed by the equation
\begin{equation}
\frac{\textrm{d}u}{\textrm{d}t}=\mathcal{F}(u),\label{211}
\end{equation}
with initial condition $u(0)=u_0\in\mathbb{R}^p$. We assume the the initial condition is uncertain and only its statistical
distribution is known, namely the Gaussian $u_0\sim N(m_0,C_0)$.
Assuming that the equation has a solution for any
$u_0\in\mathbb{R}^p$ and all positive times, we
 let $\Psi(\cdot,\cdot):\mathbb{R}^p\times\mathbb{R}^+\rightarrow\mathbb{R}^p$ be the solution operator for equation \eqref{211}. 
Now suppose that we observe the
system at equally spaced times $t_k=kh$ for all 
$k\in \mathbb{Z}^+$. For simplicity we write $\Psi(\cdot):=
\Psi(\cdot;h).$ Defining $u_k=u(t_k)=\Psi(u_0;kh)$ we have 
\begin{equation}
\label{eq:map}
u_{k+1}=\Psi(u_k), \quad k\in \mathbb{Z}^+.
\end{equation}
We assume that the data $\{y_k\}_{k\in \mathbb{Z}^+}$ 
is found from noisily observing
a linear operator $H$ applied to the system state,
at each time $t_k$, so that 
\begin{equation}\label{212}
y_{k+1}=Hu_{k+1}+{\nu}_{k+1}, \quad k\in \mathbb{N}.
\end{equation}
Here $\{\nu_k\}_{k\in\mathbb{N}}$ is an i.i.d. sequence of random variables, independent of $u_0$, with $\nu_1\sim N(0,\Gamma)$ and $H$ denotes a linear operator from $\mathbb{R}^p$ to $\mathbb{R}^m$, with
$m \le p.$ If the rank of $H$ is less than $p$
the system is said to be {\em partially observed}. 
{The partially observed situation is the most commonly
arising in applications and we concentrate on it here. The 
over-determined case $m>p$ corresponds to making 
more than one observation in certain directions; one
approach that can be used in this situation 
is to average multiple observations to reduce the effective
observational error variance 
by the square root of the number of observations in that direction,
and thereby reduce to the case where the rank is less than or
equal to $p$.}

We denote the accumulated data up to time $k$ 
by $Y_k:={\{y_j\}}^k_{j=1}$.
The pair $(u_k,Y_k)$ is a jointly varying
random variable in $\mathbb{R}^p \times \mathbb{R}^{km}$.
The goal of filtering is to determine the distribution of 
the conditioned random variable $u_k|Y_k$, and to update it
sequentially as $k$ is incremented. {This corresponds to
a sequence of inverse problems
for the system state}, given observed data, and it 
has been regularized by means of the Bayesian formulation.

\subsection{Forward Model: Lorenz '63}
\label{ssec:2.1}

When analyzing the 3DVAR approach to the filtering
problem we will focus our attention on a particular
model problem, namely the classical Lorenz '63 system \cite{lorenz1963deterministic}.
In this section we introduce the model and summarize 
the properties relevant to this paper. 
The Lorenz equations are a system of three coupled non-linear ordinary differential equations whose solution $u\in\mathbb{R}^3$, where $u=(u_x, u_y, u_z)$, satisfies
\begin{subequations}
\begin{eqnarray}
\dot{u}_x&=&\alpha(u_y-u_x),
\label{32a}\\
\dot{u}_y&=&-\alpha u_x-u_y-{u_x}u_z, \label{32b}\\
\dot{u}_z&=&u_x{u_y}-b{u_z}-b(r+\alpha). \label{32c}
\end{eqnarray}
\end{subequations}
Note that we have employed a coordinate system where origin is shifted to the point $\bigl(0,0,-(r+\alpha)\bigr)$ as discussed in \cite{book:Temam1997}. Throughout this paper we will use the classical parameter values $(\alpha,b,r)=(10,\frac{8}{3},28)$ in 
all of our numerical experiments. At these values,
the system is chaotic \cite{WT2002} and
has one positive and one negative
Lyapunov exponent and the third is zero, reflecting time
translation-invariance.
Our theoretical results, however, simply require 
that $\alpha, b > 1$ and $r>0$ and we make this
assumption, without further comment, throughout the remainder
of the paper.

In the following it is helpful to write the Lorenz equation in the following form as given in \cite{Titi2001},\cite{hayden2011discrete}:
\begin{equation}
\frac{du}{dt}+Au+B(u,u)=f,\quad u(0)=u_0,\label{0}
\end{equation}
where 
\[ A=\left( \begin{array}{ccc}
\alpha & -\alpha & 0 \\
\alpha & 1 & 0 \\
0 & 0 & b \end{array} \right), \quad f=\left( \begin{array}{c}
0\\
0\\
-b(r+\alpha)
\end{array}\right)\]

\[
B(u,\tilde{u})=\left(\begin{array}{c}
0\\
(u_x\tilde{u_z}+u_z\tilde{u_x})/2\\
-(u_x\tilde{u_y}+u_y\tilde{u_x})/2\\
\end{array}\right).\]
We use the notation $\langle\cdot,\cdot\rangle$
and $|\cdot|$ for the standard Euclidean inner-product
and norm.
When describing our observations it will also be useful
to employ the projection matrices $P$ and $Q$ defined by 
\begin{equation}\label{31}
 P=\left( \begin{array}{ccc}
1 & 0 & 0 \\
0 & 0 & 0 \\
0 & 0 & 0 \end{array} \right) \quad Q=\left( \begin{array}{ccc}
0 & 0 & 0 \\
0 & 1 & 0 \\
0 & 0 & 1 \end{array} \right).
\end{equation}

We will use the following properties of $A$ and $B$:
\begin{properties} [\cite{hayden2011discrete}]
For all $u,\tilde{u} \in \mathbb{R}^3$
\label{p:2.1}
\mbox{}
\begin{enumerate}
 \renewcommand{\theenumi}{(\arabic{enumi})}
\item $\langle Au,u\rangle=\alpha u_{x}^{2}+u_{y}^{2}+bu_{z}^{2}> {|u|}^2$ provided that $\alpha,b>1.$ \label{itm:a}
\item $\langle B(u,u),u\rangle=0$.\label{itm:b}
\item $B(u,\tilde{u})=B(\tilde{u},u)$.\label{itm:c}
\item $|B(u,\tilde{u})|\leq 2^{-1}|u||\tilde{u}|$.\label{itm:d}
\item $|\langle B(u,\tilde{u}),\tilde{u}\rangle|\leq 2^{-1}|u||\tilde{u}||P\tilde{u}|$.\label{itm:e}
\end{enumerate}
\end{properties}

We will also use the following:

\begin{proposition} (\cite{hayden2011discrete}, Theorem 2.2)
Equation (\ref{0}) has a global attractor $\mathcal{A}$. Let $u$ be a trajectory with $u_0\in \mathcal{A}$. Then $|u(t)|^2\leq K$ for all $t\in \mathbb{R}$ where 
\begin{equation}\label{K}
K=\frac{b^2(r+\alpha)^2}{4(b-1)}.
\end{equation}
\end{proposition}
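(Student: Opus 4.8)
The plan is to run the standard dissipativity argument for a system with a quadratic, energy-conserving nonlinearity, exploiting the structural identities of Properties~\ref{p:2.1} to eliminate the nonlinear contribution and then completing the square carefully enough to recover the sharp constant $K$.

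First I would take the inner product of \eqref{0} with $u$, obtaining
\[
\tfrac12\tfrac{d}{dt}|u|^2=\langle u,f\rangle-\langle Au,u\rangle-\langle B(u,u),u\rangle .
\]
The last term vanishes by Properties~\ref{p:2.1}\ref{itm:b}, and by Properties~\ref{p:2.1}\ref{itm:a} we have $\langle Au,u\rangle=\alpha u_x^2+u_y^2+bu_z^2$; together with $\langle u,f\rangle=-b(r+\alpha)u_z$ this gives
\[
\tfrac12\tfrac{d}{dt}|u|^2=-\alpha u_x^2-u_y^2-bu_z^2-b(r+\alpha)u_z .
\]

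Next I would extract a full copy of $-|u|^2$ from the right-hand side: writing $-\alpha u_x^2=-u_x^2-(\alpha-1)u_x^2$ and $-bu_z^2=-u_z^2-(b-1)u_z^2$, discarding $-(\alpha-1)u_x^2\le 0$ (this is where $\alpha>1$ enters), and completing the square in the leftover $-(b-1)u_z^2-b(r+\alpha)u_z$ (this is where $b>1$ enters, yielding the $b-1$ in the denominator) gives
\[
\tfrac12\tfrac{d}{dt}|u|^2\le-|u|^2+\frac{b^2(r+\alpha)^2}{4(b-1)}=-|u|^2+K .
\]
Gronwall's inequality then yields $|u(t)|^2\le e^{-2t}|u_0|^2+K(1-e^{-2t})$ for every $t\ge0$ in the interval of existence; since the right-hand side never exceeds $\max\{|u_0|^2,K\}$, solutions cannot blow up, so \eqref{0} generates a continuous semiflow on $\mathbb{R}^3$, and for each $\eps>0$ every bounded set of initial data enters and remains in the closed ball $\overline{B(0,\sqrt{K+\eps})}$ after a finite time. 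A bounded — hence, in $\mathbb{R}^3$, precompact — absorbing set thus exists, and the classical theory of dissipative dynamical systems produces the global attractor $\mathcal{A}$.

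Finally I would use invariance of $\mathcal{A}$ twice. Being contained in every absorbing ball, $\mathcal{A}\subseteq\bigcap_{\eps>0}\overline{B(0,\sqrt{K+\eps})}=\overline{B(0,\sqrt K)}$; and through any $u_0\in\mathcal{A}$ there is a complete orbit $\{u(t):t\in\mathbb{R}\}\subseteq\mathcal{A}$, so $|u(t)|^2\le K$ for all $t\in\mathbb{R}$. The only genuine obstacle is the algebra in the third step: one must \emph{not} complete the square too early — absorbing the whole of $bu_z^2+b(r+\alpha)u_z$ at once produces a cruder constant — so that what survives is exactly $K=b^2(r+\alpha)^2/4(b-1)$; beyond that one only needs to quote the attractor-existence theorem with its correct hypotheses, namely continuity of the semiflow (handed to us for free by the a priori bound) together with a bounded absorbing set.
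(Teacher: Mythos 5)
Your argument is correct and is essentially the standard one: the paper does not prove this proposition but quotes it from the cited reference, where the proof is exactly this energy estimate — cancel the nonlinearity via $\langle B(u,u),u\rangle=0$, peel off $-|u|^2$ from $\langle Au,u\rangle$ using $\alpha,b>1$, complete the square in $u_z$ to get $\tfrac12\tfrac{d}{dt}|u|^2\le -|u|^2+K$, and conclude by Gronwall plus invariance of the attractor. The algebra recovering the sharp constant $K=b^2(r+\alpha)^2/4(b-1)$ checks out.
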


Figure \ref{fig:fig1} illustrates the
properties of the equation.
Sub-figure \ref{fig:subfig11} shows the global attractor
${\mathcal A}$. Sub-figures \ref{fig:subfig21}, \ref{fig:subfig31} and \ref{fig:subfig41} show the components $u_x$, $u_y$ and $u_z$, respectively, plotted against time.

\begin{figure}[h!]
\centering

\subfigure[Sample trajectory for a solution lying on global attractor.]{
\includegraphics[width=0.45\textwidth]{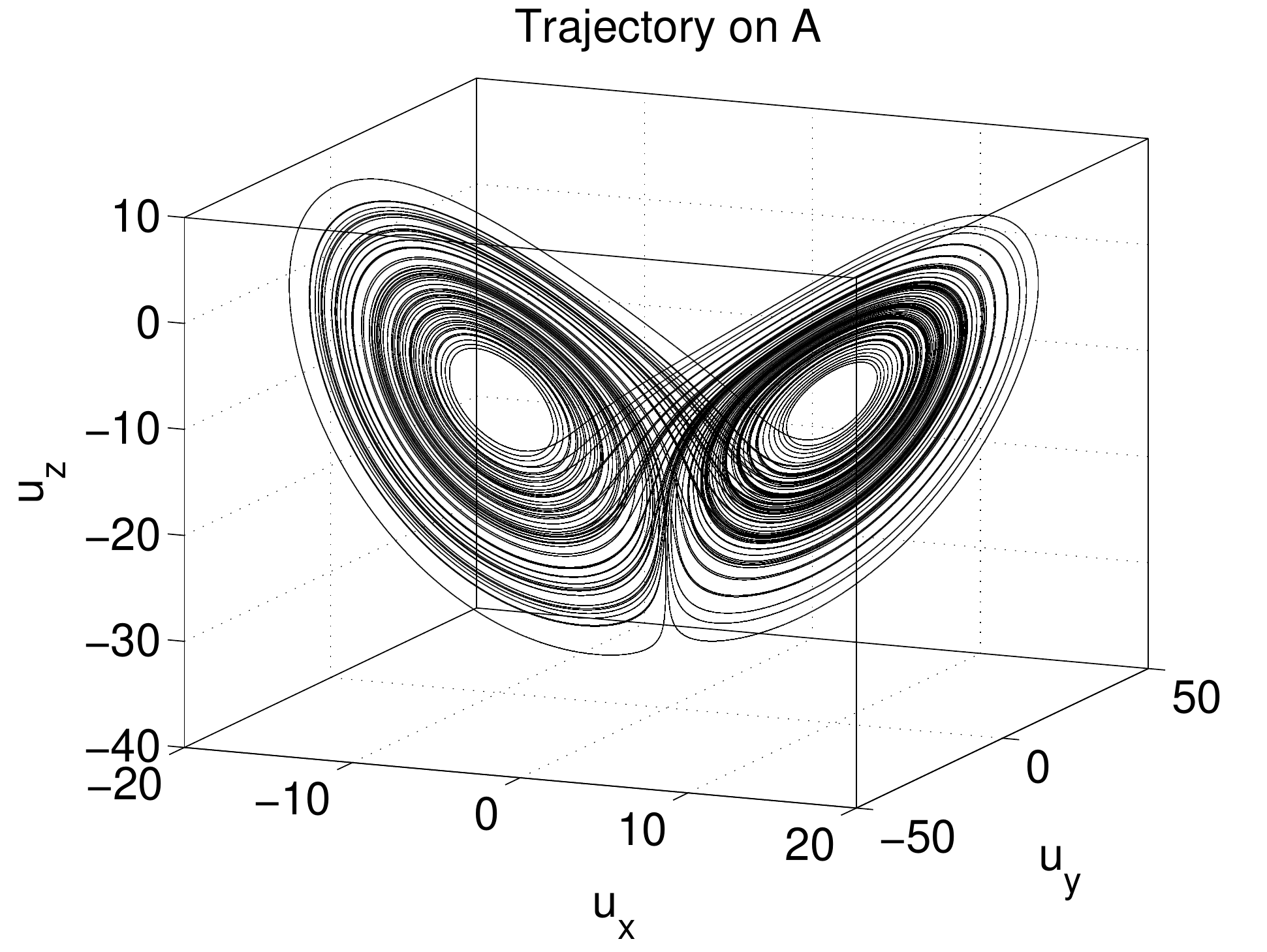} 
\label{fig:subfig11}
}
\subfigure[$u_x$ component]{
\includegraphics[width=0.45\textwidth]{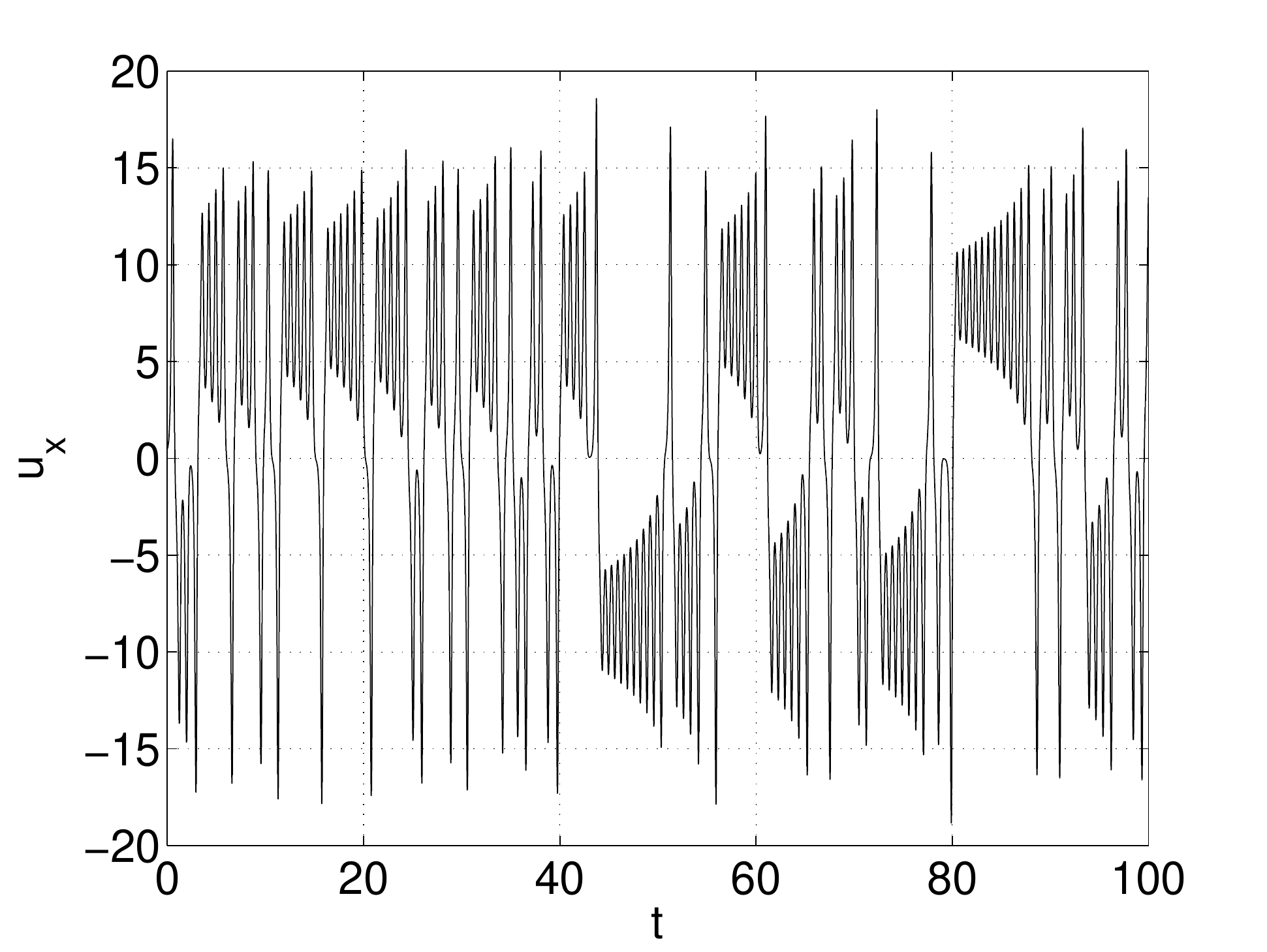} 
\label{fig:subfig21}
}
\subfigure[$u_y$ component.]{
\includegraphics[width=0.45\textwidth]{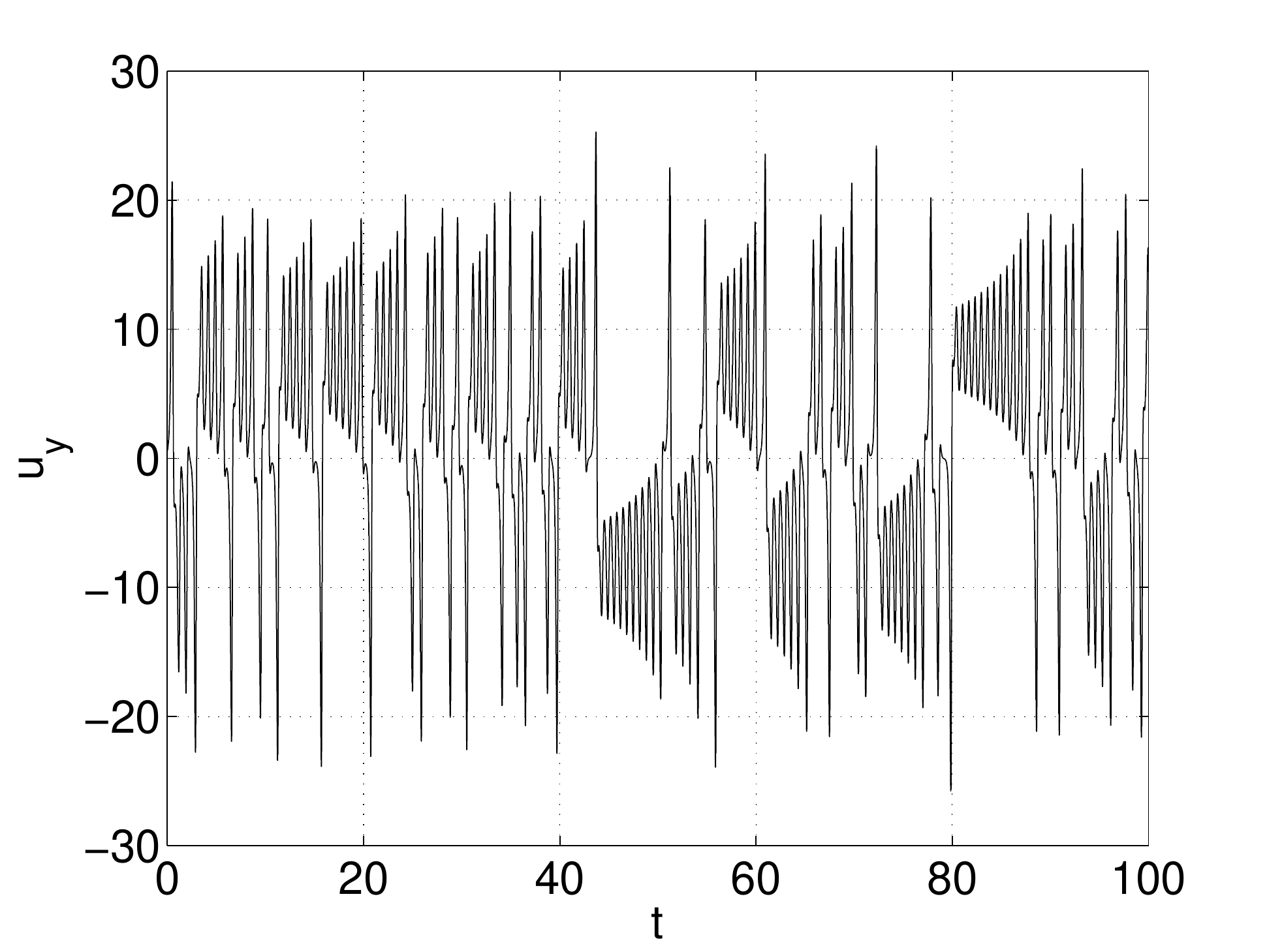} 
\label{fig:subfig31}
}
\subfigure[$u_z$ component.]{
\includegraphics[width=0.45\textwidth]{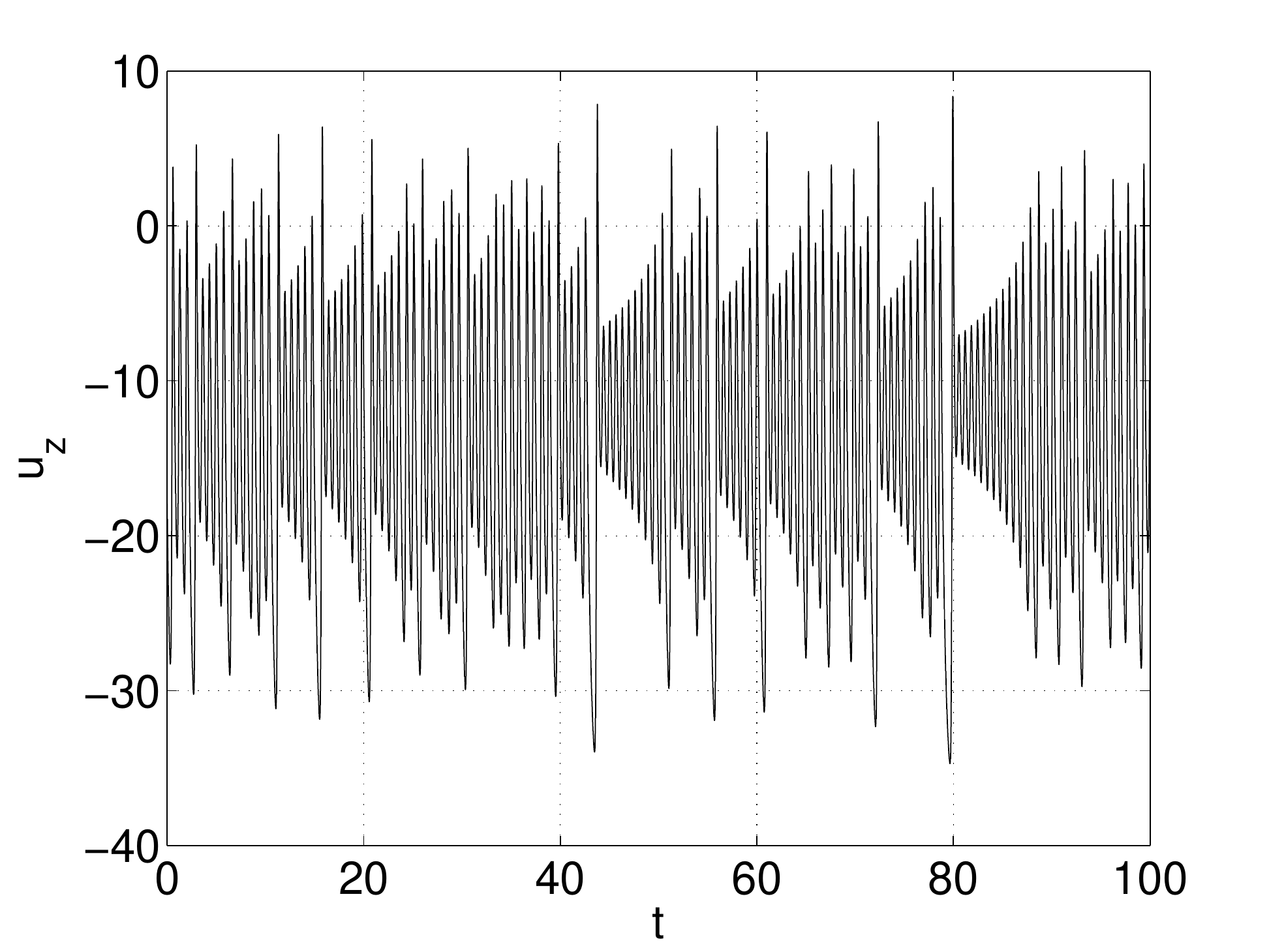} 
\label{fig:subfig41}
}

\caption{Lorenz attractor and individual components.}\label{fig:fig1}
\end{figure}

\subsection{3DVAR: Discrete Time Data}
\label{ssec:2.2}

In this section we describe the 3DVAR filtering scheme 
for the model (\ref{211}) in the case where 
the system is observed discretely at equally spaced time points.
The system state at time $t_k=kh$ is denoted by
$u_k=u(t_k)$ and the data upto that time
is $Y_k=\{{y_j}\}_{j=1}^k.$
Recall that our aim is to find the probability distribution of $u_k|Y_k$. Approximate Gaussian filters, of which 3DVAR is
a prototype, impose the following approximation: 
\begin{equation}\label{32}
\mathbb{P}(u_k|Y_k)= N(m_k,C_k).
\end{equation}
Given this assumption the filtering scheme can be written as an update rule 
\begin{equation}\label{33}
(m_k,C_k)\mapsto(m_{k+1},C_{k+1}).
\end{equation}
To determine this update we make a further Gaussian 
approximation, namely that $u_{k+1}$ 
given $Y_k$ follows a Gaussian distribution:
\begin{equation}\label{34}
\mathbb{P}(u_{k+1}|Y_k)= N(\hat{m}_{k+1},\hat{C}_{k+1}).
\end{equation}
Now we can break the update rule into two steps of {\em prediction} $(m_k,C_k)\mapsto(\hat{m}_{k+1},\hat{C}_{k+1})$ and {\em analysis} $(\hat{m}_{k+1},\hat{C}_{k+1})\rightarrow(m_{k+1},C_{k+1})$. For the prediction step we assume that  $\hat{m}_{k+1}=\Psi(m_k)$ whilst the choice of the covariance matrix $\hat{C}_{k+1}$ depends upon the choice of particular approximate Gaussian filter under consideration. For the analysis step, (\ref{34}) together with the fact that $y_{k+1}|u_{k+1}\sim N(Hu_{k+1},\Gamma)$
and application of Bayes' rule, implies that
\begin{equation}\label{35}
u_{k+1}|Y_{k+1}\sim N({m}_{k+1},{C}_{k+1})
\end{equation}
where \cite{harvey1991forecasting}
\begin{subequations}
\begin{eqnarray}
C_{k+1}&=&\hat{C}_{k+1}-\hat{C}_{k+1}H^*(\Gamma + H\hat{C}_{k+1}H^*)^{-1}H\hat{C}_{k+1}\label{36a}\\
m_{k+1}&=&\Psi(m_k)+\hat{C}_{k+1}H^*(\Gamma+H\hat{C}_{k+1}H^*)^{-1}\bigl(y_{k+1}-
H\Psi({m}_{k})\bigr).\label{36b}
\end{eqnarray}
\end{subequations}
As mentioned the choice of update rule $C_k\rightarrow \hat{C}_{k+1}$ defines the particular approximate Gaussian filtering scheme. For the 3DVAR scheme we impose $\hat{C}_{k+1}=C\;\; \forall k\in\mathbb{N}$ where $C$ is a positive definite 
$p \times p$ matrix. From equation (\ref{36b}) we then get
\begin{eqnarray}
m_{k+1}&=&\Psi(m_k)+CH^*{(\Gamma + HCH^*)}^{-1}\bigl(y_{k+1}-H\Psi(m_k)\bigr)\notag\\
&=&(I-GH)\Psi(m_k)+Gy_{k+1} \label{39}
\end{eqnarray}
where 
\begin{equation}
\label{kalman}
G:=CH^*{(\Gamma + HCH^*)}^{-1}
\end{equation}
is called Kalman gain matrix.
{The iteration} \eqref{39} is analyzed in section \ref{sec:dt}.

Another way of defining the 3DVAR filter is by means of
the following variational definition:
\begin{equation}
m_{k+1}= \underset{m}{\rm argmin}\;\left(\frac{1}{2}{\|C^{-\frac{1}{2}}\bigl(m-\Psi(m_k)\bigr)\|}^2+\frac{1}{2}{\|\Gamma^{-\frac{1}{2}}(y_{k+1}-Hm)\|}^2\right).
\end{equation} 
This coincides with the previous definition because the mean
of a Gaussian can be characterized as the minimizer of the
negative of the logarithm of the probability density function {and
because the analysis step corresponds to a Bayesian Gaussian update, 
given the assumptions underlying the filter; indeed the fact that
the negative logarithm is the sum of two squares follows from Bayes' theorem.}
From the variational formulation, it is clear that
the 3DVAR filter is a compromise between fitting the
model and the data. The model uncertainty is characterized
by a fixed covariance $C$, and the data uncertainty by
a fixed covariance $\Gamma$; the ratio of the size
of these two covariances will play an important role
in what follows.

\subsection{3DVAR: Continuous Time Data}
\label{ssec:2.3}

In this section we describe the limit of high frequency 
observations $h \to 0$ which, with appropriate scaling of the
noise covariance with respect to the observation time $h$,
leads to a stochastic differential equation (SDE) limit 
for the 3DVAR filter. We refer to this SDE as the continuous
time 3DVAR filter. We give a brief derivation, referring
to \cite{blomker2012accuracy} for further details and to \cite{reichbergman} for a related analysis of continuous time
limits in the context of the ensemble Kalman filter.

We assume the following scaling for the observation 
error covariance matrix: $\Gamma = \frac{1}{h}\Gamma_0$. Thus,
although the data arrives more and more frequently, as
we consider the limit $h \to 0$, it is also becoming
more uncertain; this trade-off leads to the SDE limit.
Define the sequence of variables $\{z_k\}_{k\in\mathbb{N}}$ by the relation $z_{k+1}=z_k+hy_{k+1}$ and $z_0=0$. Then
\begin{eqnarray}\label{38}
z_{k+1}=z_k+hHu_{k+1}+\sqrt{h\Gamma_0}\gamma_k, \quad z_0=0.
\end{eqnarray}
Here $\gamma_k\sim N(0,I)$. By rearranging and taking limit as $h\rightarrow 0$ we get
\begin{equation}\label{41}
\frac{\textrm{d}z}{\textrm{d}t}=Hu+\sqrt{\Gamma_0}\frac{\textrm{d}w}{\textrm{d}t},
\end{equation}
where $w$ is an ${\mathbb R}^m$ valued standard Brownian motion.
We think of $Z(t):=\{z(s)\}_{s \in [0,t]}$ as being
the data. For each fixed $t$ we have the jointly
varying random variable $(u(t),Z(t)) \in \bbR^p \times
C([0,t];\bbR^m).$ We are interested in the filtering problem
of determining the sequence of conditioned 
probability distributions
implied by the random variable $u(t)|Z(t)$ in $\bbR^p.$ 
The 3DVAR filter imposes Gaussian approximations 
of the form $N\bigl(m(t),C\bigr).$ We now derive
the evolution equation for $m(t).$

Recall the vector field $\mathcal{F}$ which drives equation \eqref{211}.
Using equation (\ref{38}) in (\ref{39}), together with the fact that $\Psi(u)=u+h\mathcal{F}(u)+{\mathcal O}(h^2)$, gives
\begin{equation}
m_{n+1}=m_n+h\mathcal{F}(m_n)+\mathcal{O}(h^2)+hCH^*(\Gamma_{0}+hHCH^*)^{-1}\left(\frac{z_{n+1}-z_n}{h}-Hm_n\right).
\end{equation}
Rearranging and taking limit $h\rightarrow 0$ gives
\begin{equation}\label{37}
\frac{\textrm{d}m}{\textrm{d}t}=\mathcal{F}(m)+CH^*\Gamma_{0}^{-1}\left(\frac{\textrm{d}z}{\textrm{d}t}-Hm\right).
\end{equation}
Equation (\ref{37}) defines the continuous time 3DVAR filtering scheme and is analyzed in section \ref{sec:ct}.
The data should be viewed as the continuous time stream
$Z(t)=\{z(s)\}_{s \in [0,t]}$ and equations \eqref{41} and \eqref{37} as
stochastic differential equations driven by $w$ and $z$
respectively.

\section{Analysis of Discrete Time 3DVAR}
\label{sec:dt}

In this section we analyse the discrete time 3DVAR algorithm when applied to a partially observed Lorenz '63 model; in particular we assume only that the $u_x$ component is observed. We start, in subsection \ref{ssec:3.1}, with some general discussion of 
error propagation properties of the filter. In subsection \ref{ssec:3.2} we study mean square behaviour of the filter for Gaussian noise. 
Recall the projection matrices $P$ and $Q$ given by (\ref{31}), we will use these in the following.
We will also use $\{v_k\}$ to denote the exact
solution sequence from the Lorenz equations
which underlies the data; this is to be contrasted
with $\{u_k\}$ which denotes the random variable
which, when conditioned on the data, is 
approximated by the 3DVAR filter.

\subsection{Preliminary Calculations}
\label{ssec:3.1}
Throughout we assume that {$H=(1,0,0)$, so that only $u_x$ is observed,
and we choose the model covariance $C=\eta^{-1}\epsilon^2 I$. 
We also assume that $\Gamma = {\epsilon}^2$. 
The Kalman gain matrix is then $G=\frac{1}{1+\eta}H^*$ and
the 3DVAR filter \eqref{39} may be written 
\begin{equation}
m_{k+1} = \left( \frac{\eta}{1+\eta}P+Q\right)\Psi(m_k)+\frac{1}{1+\eta}y_{k+1}H^*.
\label{39a}
\end{equation}
The scalar parameter is a design parameter whose choice
we will discuss through the analysis of the iteration \eqref{39a}.
Note that we are working with rather specific choices of
model and observational noise covariances $C$ and $\Gamma$;
we will comment on generalizations in the concluding section
\ref{sec:conc}.}

We define $v$ to be the true solution of the Lorenz equation 
(\ref{0}) which underlies the data, and we define $v_k=v(kh)$, 
the solution at observation times. 
Note that, since $\Gamma = {\epsilon}^2$, it is consistent to assume 
that the observation errors have the form  
\[\nu_k=\left( \begin{array}{c}
\epsilon \xi_k\\
0\\
0
\end{array}\right),\]
where $\xi_k$ are i.i.d. random variables on ${\mathbb R}$. 
We will consider the case $\xi_1\sim N(0,1)$ for simplicity
of exposition.  Note that we may write
\begin{eqnarray*}
y_{k+1}H^*&=&Pv_{k+1}+\nu_{k+1}\\
&=&P\Psi(v_k)+\nu_{k+1}.
\end{eqnarray*}
Thus
\begin{equation}
\label{eq:me}
m_{k+1}=\left(\frac{\eta}{1+\eta}P+Q\right)\Psi(m_k) + \frac{1}{1+\eta}\Bigl(P\Psi(v_k)+\nu_{k+1}\Bigr).
\end{equation}
Observe that
\begin{equation}
v_{k+1}=\Psi(v_k)
=\left(\frac{\eta}{1+\eta}P+Q\right)\Psi(v_k)+\frac{1}{1+\eta}P\Psi(v_k).
\label{eq:ve}
\end{equation}

We are interested in comparing $m_k$, the output of the
filter, with $v_k$ the true signal which underlies the data.
We define the error process $\delta(t)$ as follows:
\[
 \delta(t)=\left\{ \begin{array}{c}
m_k-v(t)\quad \textrm{if}\:\: t=t_k\\
\Psi(m_k,t-t_k)-v(t)\quad \textrm{if}\:\: t\in(t_{k},t_{k+1})\\
\end{array}\right.
\]
Observe that $\delta$ is discontinuous at times
$t_j$ which are multiples of $h$, since $m_{k+1} \ne
\Psi(m_k;h).$ In the following
we write $\delta(t_j^-)$
for $\lim_{t\rightarrow t_{j}^-}\delta(t)$ and we
define $\delta_j=\delta(t_j)$.
Thus $\delta_j \ne \delta(t_j^-).$ 
Subtracting \eqref{eq:ve} from \eqref{eq:me} we obtain
\begin{equation}
\delta(t_{k+1})=\left(\frac{\eta}{1+\eta}P+Q\right)\delta(t_{k+1}^-)+\frac{1}{1+\eta}\nu_k \label{89}.
\end{equation}
Now consider the time interval $(t_k,t_{k+1}).$ Since $\delta(t)$
is simply given by the difference of two solutions of the
Lorenz equations in this interval, we have
\begin{equation}
\frac{d\delta}{dt}+A\delta+B(v,\delta)+B(\delta ,v)+B(\delta,\delta)=0,\quad t\in(t_k,t_{k+1}). \label{1}
\end{equation}
Taking the Euclidean inner product of equation (\ref{1}) with $\delta$ gives
\begin{equation}
\frac{1}{2}\frac{d{|\delta|}^2}{dt}+\langle A\delta , \delta\rangle+\langle B(v,\delta), \delta\rangle+\langle B(\delta ,v), \delta\rangle +\langle B(\delta,\delta), \delta\rangle=0
\end{equation}
which, on simplifying and using Properties \ref{p:2.1}, gives
\begin{equation}
\frac{1}{2}\frac{d{|\delta|}^2}{dt}+\langle A\delta , \delta\rangle+ 2\langle B(v,\delta), \delta\rangle=0\label{2},
\end{equation}
and hence
\begin{equation}
\frac{1}{2}\frac{d{|\delta|}^2}{dt}+{|\delta|}^{2}+ 2\langle B(v,\delta), \delta\rangle\leq 0.\label{7}
\end{equation}

In order to use \eqref{89} we wish to estimate
the behaviour of $\delta(t_{k+1}^-)$ in terms of
$\delta_k$. The following is useful in this regard and
may be proved by using \eqref{7} together with
Properties \ref{p:2.1}(4). Note that $K$ is defined by equation (\ref{K})
and is necessarily greater than or equal to one, since $b, \alpha > 1.$ 

\begin{proposition}[\cite{hayden2011discrete}]\label{11}
Assume the true solution $v$ lies on the global attractor ${\mathcal A}$ so that 
${\rm sup}_{t \ge 0}|v(t)|^2\leq K$ with 
\[ K=\frac{b^2(r+\alpha)^2}{4(b-1)}.
\]
Then for $\beta = 2\left(K^{1/2}-1\right)$ it follows that ${|\delta (t)|}^{2}\leq {|\delta_k|}^{2}e^{\beta (t-t_k)}$ for $t\in [t_k , t_{k+1})$.
\end{proposition}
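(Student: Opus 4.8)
The plan is to derive a differential inequality for $|\delta(t)|^2$ on the open interval $(t_k,t_{k+1})$ and then apply Gr\"onwall's lemma. Starting from \eqref{7}, the only term that is not manifestly of the right sign is the cross term $2\langle B(v,\delta),\delta\rangle$, so the first step is to bound it. Using Property \ref{p:2.1}(4) we have $|B(v,\delta)|\le \tfrac12 |v||\delta|$, hence by Cauchy--Schwarz $|\langle B(v,\delta),\delta\rangle|\le \tfrac12 |v||\delta|^2$, and since $v$ lies on the global attractor, $|v(t)|\le K^{1/2}$, giving $2\langle B(v,\delta),\delta\rangle \ge -K^{1/2}|\delta|^2$. (One could alternatively try the sharper Property \ref{p:2.1}(5), but it would involve $|P\delta|$ rather than $|\delta|$ and does not obviously help for this particular estimate, so I would stick with (4).)

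Substituting this bound into \eqref{7} yields
\begin{equation*}
\frac{1}{2}\frac{d|\delta|^2}{dt} + |\delta|^2 - K^{1/2}|\delta|^2 \le 0,
\end{equation*}
that is, $\frac{d}{dt}|\delta|^2 \le 2\bigl(K^{1/2}-1\bigr)|\delta|^2 = \beta\,|\delta|^2$ with $\beta = 2(K^{1/2}-1)$. Since $b,\alpha>1$ forces $K\ge 1$ (as remarked before the statement), $\beta\ge 0$, so this is a genuine growth bound rather than decay; that is fine, the proposition only claims an upper bound. The second step is then the integrating-factor/Gr\"onwall argument: multiplying by $e^{-\beta(t-t_k)}$ shows $t\mapsto e^{-\beta(t-t_k)}|\delta(t)|^2$ is non-increasing on $[t_k,t_{k+1})$, whence $|\delta(t)|^2 \le |\delta(t_k)|^2 e^{\beta(t-t_k)} = |\delta_k|^2 e^{\beta(t-t_k)}$ for all $t\in[t_k,t_{k+1})$.

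The only genuine subtlety, and the step I would be most careful about, is the regularity needed to run Gr\"onwall on the half-open interval: $\delta$ solves the smooth ODE \eqref{1} on the open interval $(t_k,t_{k+1})$ with initial value $\delta(t_k)=\delta_k$ (the jump in $\delta$ occurs at the right endpoint $t_{k+1}$, not at $t_k$), so $|\delta|^2$ is $C^1$ on $[t_k,t_{k+1})$ and the differential inequality holds classically there; no distributional or one-sided-derivative gymnastics are required. Everything else is a routine application of Properties \ref{p:2.1} and the attractor bound, so I expect the proof to be short. The result is of course already essentially contained in \cite{hayden2011discrete}; the point here is just to record the constant $\beta$ in the normalization adapted to the present coordinates and parameter conventions.
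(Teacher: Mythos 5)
Your argument is correct and is precisely the route the paper indicates (it only sketches the proof, remarking that the result "may be proved by using \eqref{7} together with Properties \ref{p:2.1}(4)"): bound $2\langle B(v,\delta),\delta\rangle$ from below by $-K^{1/2}|\delta|^2$ via Property \ref{p:2.1}(4), Cauchy--Schwarz and the attractor bound, then apply Gr\"onwall on $[t_k,t_{k+1})$. Your remarks on $\beta\ge 0$ and on the right-continuity of $\delta$ at $t_k$ (the jump occurring only at $t_{k+1}$) are accurate and fill in the details the paper leaves implicit.
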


\subsection{Accuracy Theorem}
\label{ssec:3.2}

In this subsection we assume that $\xi_1\sim N(0,1)$ and
we study the behaviour of the filter in forward
time when the size of the observational noise, 
${\cal O}(\epsilon)$, is small.
The following result shows that, provided variance
inflation is employed ($\eta$ small enough), the 3DVAR filter
can recover from an ${\cal O}(1)$ initial error and enter
an ${\cal O}(\epsilon)$ neighbourhood 
of the true signal. The results are
proved in mean square. The reader will observe that
the bound on the error behaves poorly as the observation
time $h$ goes to zero, a result of the over-weighting of
observed data which is fluctuating wildly as $h \to 0.$
This effect is removed in section \ref{sec:ct} where the
observational noise is scaled appropriately, in terms
of $h \to 0$, to avoid this effect.

For this theorem we define a norm $\|\cdot\|$ by $\|u\|^2=|u|^2+|Pu|^2$,
where $|\cdot|$ is the Euclidean norm.

\begin{theorem}\label{91}
Let $v$ be a solution of the Lorenz equation (\ref{0}) with $v(0)\in \mathcal{A}$, the global attractor. Assume that $\xi_1\sim N(0,1)$ so that the observational noise is Gaussian. Then there exist $h_c>0,\;\lambda>0$ such that for all $\eta$ sufficiently small and all $h\in (0,h_c)$
\begin{equation}
\mathbb{E}{||\delta_{k+1}||}^2\le (1-\lambda h)\mathbb{E}{||\delta_k||}^2+2{\epsilon}^2.
\end{equation}
Consequently
\begin{equation}\label{58}
\underset{k\rightarrow\infty}{\limsup}\,\mathbb{E}{||\delta_k||}^2\le \frac{2{\epsilon}^2}{\lambda h}.
\end{equation}
\end{theorem}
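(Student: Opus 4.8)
The plan is to track the error $\delta$ across one observation cycle $[t_k,t_{k+1})$, combining the continuous-time dissipation estimate \eqref{7} with the discrete-time contraction coming from the analysis step \eqref{89}, and then take expectations. First I would work on the interval $(t_k,t_{k+1})$ using \eqref{7}: the term $2\langle B(v,\delta),\delta\rangle$ is controlled by Properties \ref{p:2.1}(5), so $|\langle B(v,\delta),\delta\rangle|\le K^{1/2}|\delta||P\delta|$. Splitting $|\delta|^2 = |P\delta|^2+|Q\delta|^2$ and using Young's inequality on the cross term, one obtains a differential inequality of the shape $\frac{d}{dt}|\delta|^2 \le -c_1|Q\delta|^2 + c_2|P\delta|^2$ (the $P$-direction is where the observations will have to do the work). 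Together with Proposition \ref{11}, which gives the crude a priori bound $|\delta(t)|^2\le |\delta_k|^2 e^{\beta(t-t_k)}$, I can integrate to get $|\delta(t_{k+1}^-)|^2 \le |\delta_k|^2 e^{\beta h}$ and, more importantly, a bound on $\int_{t_k}^{t_{k+1}}|P\delta|^2\,dt$ and hence a refined statement that $|Q\delta(t_{k+1}^-)|^2$ is close to $|Q\delta_k|^2$ with an error of size $O(h)\cdot|\delta_k|^2$, while $|P\delta(t_{k+1}^-)|^2\le e^{\beta h}|P\delta_k|^2$.

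Next I would feed these into the analysis update \eqref{89}. Since $\delta(t_{k+1}) = \left(\frac{\eta}{1+\eta}P+Q\right)\delta(t_{k+1}^-) + \frac{1}{1+\eta}\nu_k$, the $Q$-component is unchanged by the Kalman update while the $P$-component is multiplied by $\frac{\eta}{1+\eta}$, and the noise $\nu_k = (\epsilon\xi_k,0,0)^\top$ enters only the $P$-direction with coefficient $\frac{1}{1+\eta}$. Taking the $\|\cdot\|^2 = |\cdot|^2 + |P\cdot|^2$ norm and then expectations, using $\EE\xi_k^2 = 1$ and independence of $\nu_k$ from $\delta(t_{k+1}^-)$, gives roughly
\begin{equation*}
\EE\|\delta_{k+1}\|^2 \le 2\left(\tfrac{\eta}{1+\eta}\right)^2 e^{\beta h}\,\EE|P\delta_k|^2 + (1+Ch)\,\EE|Q\delta_k|^2 + 2\left(\tfrac{1}{1+\eta}\right)^2\epsilon^2.
\end{equation*}
The point is that the $P$-contribution carries the small factor $\eta^2$, which for $\eta$ small beats any $O(h)$-independent constant, while the $Q$-contribution only grows like $1+Ch$; but $1+Ch$ is not a contraction, so I need the genuine dissipation in the $Q$-direction coming from the $-c_1|Q\delta|^2$ term in the differential inequality to turn $1+Ch$ into $1-\lambda h$. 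Balancing these — choosing $h_c$ so that the dissipation dominates the quadratic-in-$\delta$ error terms from $B(\delta,\delta)$ over a time step, and $\eta$ small so the $P$-factor is harmless — yields $\EE\|\delta_{k+1}\|^2\le(1-\lambda h)\EE\|\delta_k\|^2 + 2\epsilon^2$. The final $\limsup$ bound \eqref{58} is then immediate by iterating the linear recursion and summing the geometric series.

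The main obstacle is the nonlinear term $B(\delta,\delta)$, which Proposition \ref{11} handles only through an exponentially growing a priori bound; controlling it uniformly requires that over a single step of length $h$ the growth $e^{\beta h}$ stays close to $1$, so that the quadratic error does not overwhelm the linear dissipation before the next observation resets things. Concretely, the delicate estimate is showing that $\int_{t_k}^{t_{k+1}}|\delta|^3\,dt$ (or the analogous term from $\langle B(\delta,\delta),\delta\rangle$, which Property \ref{p:2.1}(2) actually kills pointwise, so in fact only $B(v,\delta)+B(\delta,v)$ survives in \eqref{2}) contributes only $O(h)\EE|\delta_k|^2$ rather than something of order $\EE|\delta_k|^2$ outright — this is where the smallness of $h_c$ is really used, and where one must be careful that the constants do not secretly depend on $\EE|\delta_k|^2$ in a way that breaks the linear recursion. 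A secondary technical point is that $\delta(t)$ is not deterministic — it depends on the past noise through $m_k$ — so all the pathwise ODE estimates must be justified $\omega$ by $\omega$ before taking expectations, and one must check the relevant quantities are integrable, which is why Proposition \ref{11}'s uniform-in-$\omega$ bound (valid because $v$ lies on the attractor and the bound depends only on $|\delta_k|$) is exactly the right tool.
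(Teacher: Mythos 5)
Your architecture coincides with the paper's: pathwise energy estimates on $(t_k,t_{k+1})$ via Properties \ref{p:2.1} and Proposition \ref{11}, the Kalman contraction of the observed component through \eqref{89}, expectations using that the noise is mean zero and independent of $\delta(t_{k+1}^-)$, and a two-component recursion whose coefficients are $\le 1-\lambda h$ for $h$ and $\eta$ small. Your parenthetical observation that $\langle B(\delta,\delta),\delta\rangle=0$ disposes of what you call the main obstacle, and your remark about doing the ODE estimates $\omega$-wise before taking expectations is apt, since Proposition \ref{11} is deterministic given $|\delta_k|$.

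The step that fails is the bound $|P\delta(t_{k+1}^-)|^2\le e^{\beta h}|P\delta_k|^2$. The observed component is not autonomous: the linear part of the error equation gives $\dot\delta_x=\alpha(\delta_y-\delta_x)$, so $P\delta$ is forced by the unobserved components and cannot be controlled by $|P\delta_k|$ alone (take $P\delta_k=0$: your bound would force $P\delta\equiv 0$ on the whole interval, which is false). The correct estimate is the paper's \eqref{5}, namely $|P\delta(t_{k+1}^-)|^2\le \frac{\alpha}{\beta+\alpha}\bigl(e^{\beta h}-e^{-\alpha h}\bigr)|\delta_k|^2+e^{-\alpha h}|P\delta_k|^2$, and the extra $O(h)|\delta_k|^2$ term is not cosmetic: after the Kalman update it contributes roughly $2\alpha\bigl(\tfrac{\eta}{1+\eta}\bigr)^2 h\,\mathbb{E}|\delta_k|^2$ to the recursion, which competes directly with the dissipation rate $-1$ coming from $\langle A\delta,\delta\rangle\ge|\delta|^2$ and is exactly what forces ``$\eta$ sufficiently small'' (the paper's $M_1'(0)=-1+2\alpha(\eta/(1+\eta))^2$). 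Your displayed recursion likewise omits the $O(Kh)\,\mathbb{E}|P\delta_k|^2$ term by which the observed error feeds the unobserved components through $2\langle B(v,\delta),\delta\rangle\le \tfrac12 K|P\delta|^2+\tfrac12|\delta|^2$; this is the leading term of the paper's $M_2$ and is absorbed only because $h_c$ is taken small relative to $1/K$. With these two cross terms restored your recursion becomes the paper's \eqref{50}, and the rest of your argument, including the geometric-series step for \eqref{58}, goes through unchanged.
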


\begin{proof}
Recall that we have $\mathbb{E}\nu_{k+1}=0$ and $\mathbb{E}{|\nu_{k+1}|}^2={\epsilon}^2$. On application of the projection
$P$ to the error 
equation (\ref{89}) for 3DVAR we obtain 
\begin{equation}
\mathbb{E}{|P\delta_{k+1}|^2} \leq {\left(\frac{\eta}{1+\eta}\right)}^2\mathbb{E}{|P\delta(t_{k+1}^-)|}^2+ {\left(\frac{1}{1+\eta}\right)}^2\epsilon^2.\label{10}\\
\end{equation}
Since $\mathbb{E}{|Q\delta_{k+1}|^2}=\mathbb{E}{|Q\delta(t_{k+1}^-)|}^2 \le \mathbb{E}{|\delta(t_{k+1}^-)|}^2$ 
we also obtain the bound
\begin{equation}
\mathbb{E}{|\delta_{k+1}|^2} \leq {\left(\frac{\eta}{1+\eta}\right)}^2\mathbb{E}{|P\delta(t_{k+1}^-)|}^2+
\mathbb{E}{|\delta(t_{k+1}^-)|}^2+ {\left(\frac{1}{1+\eta}\right)}^2\epsilon^2.\label{10a}\\
\end{equation}
Define $M_1$ and $M_2$ by 
\begin{equation}
M_1(\tau)=\frac{K\alpha}{\beta+\alpha}\left(\frac{e^{\beta\tau}-e^{-\tau}}{\beta+1}-\frac{e^{-\alpha\tau}-e^{-\tau}}{1-\alpha}\right)+e^{-\tau} 
+2{\left(\frac{\eta}{1+\eta}\right)}^2\left(\frac{\alpha}{\beta+\alpha}\right)(e^{\beta \tau}-e^{-\alpha \tau})
\end{equation}
and
\begin{equation}
M_2(\tau)= \frac{K}{1-\alpha}\left(e^{-\alpha\tau}-e^{-\tau}\right)+2{\left(\frac{\eta}{1+\eta}\right)}^2 e^{-\alpha \tau}.
\end{equation}

Adding \eqref{10} to \eqref{10a} and using Lemma \ref{l:sim}
shows that 
\begin{equation}\label{50}
\mathbb{E}{\|\delta_{k+1}\|}^2\le M_1(h)\mathbb{E}{|\delta_k|}^2 + M_2(h)\mathbb{E}{|P\delta_k|}^2+2{\left(\frac{1}{1+\eta}\right)}^2\epsilon^2,
\end{equation}
so that
\begin{equation}
\mathbb{E}{||\delta_{k+1}||}^2\le M(h)\mathbb{E}{||\delta_k||}^2+\frac{2\epsilon^2}{(1+\eta)^2},
\end{equation}
where
\begin{equation}
M(\tau)=\max \{M_1(\tau),M_2(\tau)\}.
\end{equation} 
Now we observe that
\begin{eqnarray*}
M_1(0)=1,\;
M_1'(0)=-1+2\alpha{\left(\frac{\eta}{1+\eta}\right)}^2\; \textrm{and}\; M_2(0)=2{\left(\frac{\eta}{1+\eta}\right)}^2.
\end{eqnarray*}

Thus there exists an $h_c>0$ and a $\lambda>0$ such that, for all $\eta$ sufficiently small
\begin{equation*}
M(\tau,\eta)\le 1-\lambda\tau,\quad \forall \tau\in (0,h_c].
\end{equation*}
Hence the theorem is proved.
\end{proof}

The following lemma is used in the preceding proof.

\begin{lemma}
\label{l:sim}
Under the conditions of Theorem \ref{91} for $t\in[t_k,t_{k+1})$ we have 
\begin{equation}
{|P\delta(t)|}^2\leq \frac{\alpha{|\delta_k|}^{2}}{\beta+\alpha}\left(e^{\beta (t-t_k)}-e^{-\alpha (t-t_k)}\right)+{|P\delta_k|}^2e^{-\alpha (t-t_k)}\label{5}
\end{equation}
and 
\begin{equation}
\begin{split}
|\delta(t)|^2 \leq& \frac{K\alpha{|\delta_k|}^{2}}{\beta+\alpha}\left(\frac{e^{\beta(t-t_k)}-e^{-(t-t_k)}}{\beta+1}-\frac{e^{-\alpha(t-t_k)}-e^{-(t-t_k)}}{1-\alpha}\right) \\
 & \quad + \frac{K{|P\delta_k|}^2}{1-\alpha}\left(e^{-\alpha(t-t_k)}-e^{-(t-t_k)}\right) +|\delta_k|^2e^{-(t-t_k)}.\label{9}
\end{split}
\end{equation}
\end{lemma}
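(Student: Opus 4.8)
The plan is to establish the two differential inequalities governing $|P\delta|^2$ and $|\delta|^2$ on a single interval $(t_k,t_{k+1})$ and then integrate them, using Proposition \ref{11} to control the $|\delta|^2$ terms that appear as forcing.

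\textbf{Step 1: the equation for $|P\delta|^2$.} Applying $P$ to the error equation \eqref{1} and taking the Euclidean inner product with $P\delta$, one uses the structure of $A$, $B$ and $P$ recorded in Properties \ref{p:2.1}. Since $P$ projects onto the $u_x$-component and $B(\cdot,\cdot)$ has vanishing $x$-component, the quadratic terms involving $B$ drop out of the $P$-projected energy identity, leaving essentially $\tfrac12 \tfrac{d}{dt}|P\delta|^2 + \alpha|P\delta|^2 = \alpha \langle P\delta, \text{(cross terms in }\delta_x,\delta_y)\rangle$ type contributions; more precisely one extracts from \eqref{32a} that $\dot\delta_x = \alpha(\delta_y-\delta_x)$, whence $\tfrac12\tfrac{d}{dt}|P\delta|^2 + \alpha|P\delta|^2 = \alpha \delta_x\delta_y \le \tfrac{\alpha}{2}(|P\delta|^2 + |\delta|^2 - |P\delta|^2)$ after Young's inequality, i.e. roughly $\tfrac{d}{dt}|P\delta|^2 + \alpha|P\delta|^2 \le \alpha|\delta|^2$. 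Then Gr\"onwall on $[t_k,t)$ and the bound $|\delta(s)|^2 \le |\delta_k|^2 e^{\beta(s-t_k)}$ from Proposition \ref{11} give, after evaluating $\int_{t_k}^t e^{-\alpha(t-s)}e^{\beta(s-t_k)}\,ds = \tfrac{1}{\beta+\alpha}\bigl(e^{\beta(t-t_k)}-e^{-\alpha(t-t_k)}\bigr)$, precisely \eqref{5}.

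\textbf{Step 2: the equation for $|\delta|^2$.} Here one returns to \eqref{7}, namely $\tfrac12\tfrac{d}{dt}|\delta|^2 + |\delta|^2 + 2\langle B(v,\delta),\delta\rangle \le 0$, and estimates the trilinear term using Property \ref{p:2.1}(5): $|\langle B(v,\delta),\delta\rangle| \le \tfrac12|v||\delta||P\delta|$. With $|v|^2\le K$ this yields $\tfrac12\tfrac{d}{dt}|\delta|^2 + |\delta|^2 \le K^{1/2}|\delta||P\delta| \le \tfrac12|\delta|^2 + \tfrac{K}{2}|P\delta|^2$ — but that loses a factor and does not produce the stated coefficients, so instead one should keep the sharper form $\tfrac{d}{dt}|\delta|^2 + 2|\delta|^2 \le K|P\delta|^2 + |\delta|^2$, i.e. $\tfrac{d}{dt}|\delta|^2 + |\delta|^2 \le K|P\delta|^2$. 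Integrating this linear inequality on $[t_k,t)$ against $e^{-(t-s)}$ and substituting the bound \eqref{5} just obtained for $|P\delta(s)|^2$ reduces everything to the two elementary integrals $\int_{t_k}^t e^{-(t-s)}e^{\beta(s-t_k)}\,ds$ and $\int_{t_k}^t e^{-(t-s)}e^{-\alpha(s-t_k)}\,ds$, which evaluate to $\tfrac{1}{\beta+1}(e^{\beta(t-t_k)}-e^{-(t-t_k)})$ and $\tfrac{1}{1-\alpha}(e^{-\alpha(t-t_k)}-e^{-(t-t_k)})$ respectively. Collecting the $|\delta_k|^2$-proportional and $|P\delta_k|^2$-proportional pieces and the homogeneous term $|\delta_k|^2 e^{-(t-t_k)}$ gives exactly \eqref{9}.

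\textbf{Main obstacle.} The delicate point is getting the coefficients in \eqref{5} and \eqref{9} to come out exactly as stated rather than merely up to constants; this forces one to be careful about which inequalities to apply with equality-like sharpness. In particular, for \eqref{5} one must not throw away the precise $\alpha$ from $\langle A\delta,\delta\rangle \ge \alpha|P\delta|^2$ (Property \ref{p:2.1}(1) gives $\langle A\delta,\delta\rangle = \alpha\delta_x^2 + \delta_y^2 + b\delta_z^2$, of which only the $\alpha\delta_x^2 = \alpha|P\delta|^2$ part is retained), and for \eqref{9} one must apply Property \ref{p:2.1}(5) \emph{before} Young's inequality and split as $K^{1/2}|\delta||P\delta| \le \tfrac12|\delta|^2 + \tfrac{K}{2}|P\delta|^2$ so that exactly one power of $|\delta|^2$ is absorbed into the left side, leaving coefficient $K$ on $|P\delta|^2$. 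The rest is bookkeeping with the three scalar exponential integrals, whose validity uses only $\alpha>1$ (so $1-\alpha<0$ and the formula $\tfrac{1}{1-\alpha}(e^{-\alpha\tau}-e^{-\tau})$ is a well-defined positive quantity) and $\beta>0$, $\beta+1>0$, $\beta+\alpha>0$, all of which hold since $K\ge 1$ forces $\beta = 2(K^{1/2}-1)\ge 0$.
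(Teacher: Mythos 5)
Your proposal is correct and follows essentially the same route as the paper: the same two differential inequalities $\tfrac{d}{dt}|P\delta|^2+\alpha|P\delta|^2\le\alpha|\delta|^2$ and $\tfrac{d}{dt}|\delta|^2+|\delta|^2\le K|P\delta|^2$, closed with Proposition \ref{11} and the same integrating factors and exponential integrals (your worry in Step 2 about "losing a factor" is moot, since the two forms you write are the same inequality after rearrangement, and both yield the stated coefficients).
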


\begin{proof}
Taking inner product of (\ref{1}) with $P\delta$, instead
of with $\delta$ as previously, we get
\begin{equation}
\frac{1}{2}\frac{d{|P\delta|}^2}{dt}+\langle A\delta , P\delta\rangle=0.\label{3}
\end{equation}
Let $\delta={(\delta_x, \delta_y, \delta_z)}^T$. Notice that ${|P\delta|}^2={|\delta_x|}^2$ and
$\langle A\delta , P\delta\rangle = \alpha\delta_{x}^{2}-\alpha\delta_x\delta_y.$
Therefore equation (\ref{3}) becomes 
\begin{eqnarray*}
\frac{1}{2}\frac{d{|P\delta|}^2}{dt}+\alpha\delta_{x}^{2}&=&\alpha\delta_x\delta_y\\
&\leq& \frac{\alpha}{2}\delta_{x}^{2}+\frac{\alpha}{2}\delta_{y}^{2}\\
&\leq& \frac{\alpha}{2}\delta_{x}^{2}+\frac{\alpha}{2}{|\delta|}^2.
\end{eqnarray*}
By rearranging and applying Proposition \ref{11} we get
\begin{equation}
\frac{d{|P\delta|}^2}{dt}+\alpha{|P\delta|}^2\leq \alpha{|\delta(t_k)|}^{2}e^{\beta (t-t_k)}.
\end{equation}
Multiplying by integrating factor $e^{\alpha(t-t_k)}$ and integrating from $t_k$ to $t$ gives equation \eqref{5}.

Analysing the non-linear term in equation (\ref{7}) with 
Property \ref{p:2.1}(5) gives
\begin{eqnarray}
|2\langle B(v,\delta), \delta\rangle|&\leq& |v||P\delta||\delta|\\
&\leq& K^\frac{1}{2}|P\delta ||\delta|\\
&\leq& \frac{1}{2}K{|P\delta |}^2 + \frac{1}{2}{|\delta |}^2. \label{6}
\end{eqnarray} 
Substituting (\ref{5}) and (\ref{6}) in (\ref{7}) gives
\begin{equation}
\frac{d{|\delta|}^2}{dt}+{|\delta|}^{2}\leq \frac{K\alpha{|\delta_k|}^{2}}{\beta+\alpha}\left(e^{\beta (t-t_k)}-e^{-\alpha (t-t_k)}\right)+K{|P\delta_k|}^2e^{-\alpha (t-t_k)}.\label{8}
\end{equation}
Multiplying by the integrating factor $e^{(t-t_k)}$ and integrating from $t_k$ to $t$ gives
\begin{equation}
|\delta(t)|^2e^{(t-t_k)}-|\delta_k|^2\leq \frac{K\alpha{|\delta_k|}^{2}}{\beta+\alpha}\left(\frac{e^{(\beta+1)(t-t_k)}-1}{\beta+1}-\frac{e^{(1-\alpha)(t-t_k)}-1}{1-\alpha}\right)+\frac{K{|P\delta_k|}^2}{1-\alpha}\left(e^{(1-\alpha)(t-t_k)}-1\right).
\end{equation}
Rearranging the above equation gives \eqref{9}.
\end{proof}

\section{Analysis of Continuous Time 3DVAR}
\label{sec:ct}

In this section we analyse application of the 3DVAR continuous filtering algorithm for the  Lorenz equation (\ref{0}). 
We will use $\{v(t)\}_{t \in [0,\infty)}$ to denote the exact
solution sequence from the Lorenz equations
which underlies the data; this is to be contrasted
with $\{u(t)\}_{t \in [0,\infty)}$ 
which denotes the random variable
which, when conditioned on the data, is 
approximated by the 3DVAR filter.

We study the continuous time 3DVAR filter, again in the
case where $H=(1,0,0)$, $\Gamma_0={\epsilon}^2$ 
and {$C = \eta^{-1}{\epsilon}^2 I$}.
To analyse the filter it is useful to have the truth $v$
which gives rise to the data appearing in the filter itself.
Thus \eqref{41} gives 
\begin{equation}\label{41a}
\frac{\textrm{d}z}{\textrm{d}t}=Hv+\sqrt{\Gamma_0}\frac{\textrm{d}w}{\textrm{d}t}.
\end{equation}
We then eliminate $z$ in equation (\ref{37}) by
using \eqref{41a} to obtain
\begin{equation}\label{51}
\frac{\textrm{d}m}{\textrm{d}t}=\mathcal{F}(m)+CH^*\Gamma_{0}^{-1}H(v-m)+CH^*\Gamma_{0}^{-\frac{1}{2}}\frac{\textrm{d}w}{\textrm{d}t}.
\end{equation}
In the specific case of the Lorenz equation we get
\begin{equation}\label{55}
\frac{\textrm{d}m}{\textrm{d}t}=-Am-B(m,m)+f+CH^*\Gamma_{0}^{-1}H(v-m)+CH^*\Gamma_{0}^{-\frac{1}{2}}\frac{\textrm{d}w}{\textrm{d}t}.
\end{equation}
From equation (\ref{51}) with the choices of $C$, $H$
and $\Gamma_0$ detailed above we get
\begin{equation}\label{52}
\frac{\textrm{d}m}{\textrm{d}t}=-Am-B(m,m)+f+\frac{1}{\eta}P(v-m)+\frac{\epsilon}{\eta}P\frac{\textrm{d}w}{\textrm{d}t}
\end{equation}
where we have extended $w$ from a scalar Brownian motion to an $\mathbb{R}^3$-valued Brownian motion for notational convenience. 
This {SDE} has a unique global strong solution $m \in C([0,\infty);\bbR^3)$.
Indeed similar techniques used to prove the following result may be
used to establish this global existence result, by applying the
It\^o formula to $|m|^2$ and using the global existence theory
in \cite{Mao97}; we omit the details. Recall $K$ given by \eqref{K}.

\begin{theorem}
\label{t:55}
Let $m$ solve equation(\ref{52}) and let $v$ solve 
equation (\ref{0}) with initial data $v(0) \in {\mathcal A}$,
the global attractor, so that $\sup_{t \ge 0}|v(t)|^2
\le K.$ Then for $\eta K<4$ we obtain
\begin{equation}\label{45}
\mathbb{E}{|m(t)-v(t)|}^2\le e^{-\lambda t}{|m(0)-v(0)|}^2+\frac{\epsilon^2}{\eta^2\lambda}(1-e^{-\lambda t}),
\end{equation}
where $\lambda$ is defined by 
\begin{equation}\label{56}
\lambda=2\left(1-\frac{\eta K}{4}\right).
\end{equation}
Thus
$${\rm limsup}_{t \to \infty} \mathbb{E}{|m(t)-v(t)|}^2\le
\frac{\epsilon^2}{\lambda \eta^2}.$$
\end{theorem}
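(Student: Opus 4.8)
The strategy is to derive a closed differential inequality for $\mathbb{E}|m(t)-v(t)|^2$ by an energy estimate on the error, mirroring the deterministic synchronization argument but carried out under expectation because of the additive noise. Set $e(t)=m(t)-v(t)$. Subtracting \eqref{0} from \eqref{52} and expanding $B(m,m)-B(v,v)=2B(v,e)+B(e,e)$ using bilinearity together with Property \ref{p:2.1}(3), the error obeys the It\^o equation
\begin{equation*}
\textrm{d}e = \Bigl(-Ae - 2B(v,e) - B(e,e) - \tfrac{1}{\eta}Pe\Bigr)\,\textrm{d}t + \tfrac{\epsilon}{\eta}P\,\textrm{d}w .
\end{equation*}
Applying the It\^o formula to $|e|^2$, and noting that the diffusion coefficient $\tfrac{\epsilon}{\eta}P$ contributes the quadratic-variation term $\tfrac{\epsilon^2}{\eta^2}\mathrm{Tr}(PP^{*})\,\textrm{d}t=\tfrac{\epsilon^2}{\eta^2}\,\textrm{d}t$ (since $P$ is a rank-one orthogonal projection), gives
\begin{equation*}
\textrm{d}|e|^2 = \Bigl(-2\langle Ae,e\rangle - 4\langle B(v,e),e\rangle - 2\langle B(e,e),e\rangle - \tfrac{2}{\eta}|Pe|^2 + \tfrac{\epsilon^2}{\eta^2}\Bigr)\textrm{d}t + \tfrac{2\epsilon}{\eta}\langle Pe,\textrm{d}w\rangle,
\end{equation*}
where I used $\langle Pe,e\rangle=|Pe|^2$ because $P$ is a symmetric projection.

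Next I would feed in Properties \ref{p:2.1}: part (1) gives $\langle Ae,e\rangle\ge|e|^2$, part (2) kills the trilinear term $\langle B(e,e),e\rangle=0$, and part (5) together with $\sup_{t\ge 0}|v(t)|^2\le K$ gives $|\langle B(v,e),e\rangle|\le\tfrac12 K^{1/2}|e||Pe|$. Taking expectations (the It\^o integral being a mean-zero martingale, modulo the integrability point below) and then applying Young's inequality in the form $2K^{1/2}|e||Pe|\le\tfrac{\eta K}{2}|e|^2+\tfrac{2}{\eta}|Pe|^2$ — chosen precisely so that the two $|Pe|^2$ contributions cancel — leaves
\begin{equation*}
\frac{\textrm{d}}{\textrm{d}t}\mathbb{E}|e|^2 \le -\Bigl(2-\tfrac{\eta K}{2}\Bigr)\mathbb{E}|e|^2 + \tfrac{\epsilon^2}{\eta^2} = -\lambda\,\mathbb{E}|e|^2 + \tfrac{\epsilon^2}{\eta^2},
\end{equation*}
with $\lambda=2\bigl(1-\tfrac{\eta K}{4}\bigr)$, which is strictly positive exactly under the hypothesis $\eta K<4$. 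Multiplying by $e^{\lambda t}$ and integrating (Gr\"onwall) produces \eqref{45}, and letting $t\to\infty$ yields the stated $\limsup$ bound.

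The one genuinely delicate step is justifying that $\mathbb{E}\bigl[\tfrac{2\epsilon}{\eta}\int_0^t\langle Pe(s),\textrm{d}w(s)\rangle\bigr]=0$, i.e. that $t\mapsto\mathbb{E}\int_0^t|Pe(s)|^2\,\textrm{d}s$ is finite; everything else is elementary once Properties \ref{p:2.1} are in hand. I would handle this by localization: set $\tau_N=\inf\{t\ge 0:|e(t)|\ge N\}$, carry out the computation above up to the stopped time $t\wedge\tau_N$, where the stochastic integral is a true martingale, obtain the corresponding differential inequality for $\mathbb{E}|e(t\wedge\tau_N)|^2$, and pass to the limit $N\to\infty$ via Fatou's lemma using $\tau_N\uparrow\infty$ almost surely. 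The latter is a consequence of the global strong existence of $m$ recorded just before the theorem, which is itself proved by exactly this $|m|^2$-It\^o-formula technique combined with the existence theory in \cite{Mao97}.
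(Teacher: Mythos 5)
Your proposal is correct and follows essentially the same route as the paper: the same error SDE, the It\^o formula applied to $|e|^2$, Properties \ref{p:2.1}(1),(2),(5) with Young's inequality tuned (via $\theta=\eta K/2$ in the paper's notation) so the $|Pe|^2$ terms cancel exactly, yielding $\lambda=2(1-\eta K/4)$ and then Gronwall — the paper merely packages the coercivity step as Lemma \ref{57}. Your explicit localization argument for the martingale term is a detail the paper leaves implicit, but it is consistent with the global existence remark preceding the theorem and does not change the approach.
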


\begin{proof}
The true solution follows the model
\begin{equation}\label{53}
\frac{\textrm{d}v}{\textrm{d}t}=-Av-B(v,v)+f+\frac{1}{\eta}P(v-v),
\end{equation}
where we include the last term, which is identically zero, for
clear comparison with the filter equation (\ref{52}).  
Define $\delta = m-v$ and subtract equation (\ref{53}) 
from equation(\ref{52}) to obtain
\begin{eqnarray}
\frac{\textrm{d}\delta}{\textrm{d}t}&=&-Am-B(m,m)+Av+B(v,v)-\eta^{-1}P\delta+\epsilon\eta^{-1}P\frac{\textrm{d}w}{\textrm{d}t}\\
&=&-A\delta-2B(v,\delta)-B(\delta,\delta)-\eta^{-1}P\delta
+\epsilon\eta^{-1}P\frac{\textrm{d}w}{\textrm{d}t}.
\end{eqnarray}
Using It\^o's formula gives
\begin{equation}
\frac{1}{2}\textrm{d}{|\delta|}^2
+\langle A\delta+2B(v,\delta)+B(\delta,\delta)+\frac{1}{\eta}P\delta,\delta\rangle {\textrm{d}t}
\le \langle \epsilon\eta^{-1}P\textrm{d}w,\delta\rangle + \frac{1}{2}\tr\left(\epsilon^2\eta^{-2}P\right){\textrm{d}t} .
\end{equation}
Using Lemma \ref {57} and the definition of $\lambda$ gives
\begin{equation}
\frac{1}{2}\textrm{d}{|\delta|}^2+\frac{\lambda}{2}{|\delta|}^2{\textrm{d}t}\le \langle \epsilon\eta^{-1}P\textrm{d}w,\delta\rangle + \frac{1}{2}\tr\left(\epsilon^2\eta^{-2}P\right){\textrm{d}t} .
\end{equation}
Rearranging and taking expectations gives
\begin{equation}
\frac{\textrm{d}\mathbb{E}{|\delta|}^2}{\textrm{d}t}\le -\lambda\mathbb{E}{|\delta|}^2+\frac{\epsilon^2}{\eta^{2}}.
\end{equation}
Use of the Gronwall inequality gives the desired result.
\end{proof}

The following lemma is used in the preceding proof.

\begin{lemma}\label{57}
Let $v\in\mathcal{A}.$ Then
\begin{equation}
\langle A\delta+2B(v,\delta)+B(\delta,\delta)+\frac{1}{\eta}P\delta,\delta\rangle\ge \left(1-\frac{\eta K}{4}\right){|\delta|}^2.
\end{equation}
\end{lemma}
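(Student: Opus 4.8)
\textbf{Proof proposal for Lemma \ref{57}.}
The plan is to expand the inner product term by term using Properties \ref{p:2.1}, and then absorb the cross term $2\langle B(v,\delta),\delta\rangle$ by a weighted Young's inequality tuned to the parameter $\eta$. First I would note that $\langle P\delta,\delta\rangle = |P\delta|^2$ (since $P$ is an orthogonal projection), that $\langle B(\delta,\delta),\delta\rangle = 0$ by Property \ref{p:2.1}(2), and that $\langle A\delta,\delta\rangle \ge |\delta|^2$ by Property \ref{p:2.1}(1). Hence
\[
\langle A\delta+2B(v,\delta)+B(\delta,\delta)+\tfrac{1}{\eta}P\delta,\delta\rangle
\;\ge\; |\delta|^2 + 2\langle B(v,\delta),\delta\rangle + \tfrac{1}{\eta}|P\delta|^2 .
\]

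Next I would control the indefinite middle term. By Property \ref{p:2.1}(5), $|2\langle B(v,\delta),\delta\rangle| \le |v|\,|\delta|\,|P\delta|$, and since $v \in \mathcal{A}$ we have $|v|^2 \le K$ by the Proposition following Properties \ref{p:2.1}. Applying Young's inequality in the form $ab \le \frac{\eta}{4}a^2 + \frac{1}{\eta}b^2$ with $a = |v|\,|\delta|$ and $b = |P\delta|$ gives
\[
|v|\,|\delta|\,|P\delta| \;\le\; \frac{\eta}{4}|v|^2|\delta|^2 + \frac{1}{\eta}|P\delta|^2 \;\le\; \frac{\eta K}{4}|\delta|^2 + \frac{1}{\eta}|P\delta|^2 .
\]
Substituting this bound back, the $\frac{1}{\eta}|P\delta|^2$ contributions cancel and one is left with exactly $\left(1 - \frac{\eta K}{4}\right)|\delta|^2$, which is the claim. (Equivalently, one can observe that $\frac{1}{\eta}|P\delta|^2 - |v||\delta||P\delta| + \frac{\eta K}{4}|\delta|^2$ is a quadratic in $|P\delta|$ with nonpositive discriminant $(|v|^2-K)|\delta|^2 \le 0$, hence nonnegative.)

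There is essentially no substantive obstacle here: the only point requiring a little care is choosing the split in Young's inequality so that the coefficient of $|P\delta|^2$ is precisely $1/\eta$, matching the variance-inflation term, so that it cancels and the residual $|\delta|^2$-coefficient comes out as $1 - \eta K/4$ — which is exactly the quantity that makes $\lambda$ in \eqref{56} positive under the hypothesis $\eta K < 4$ used in Theorem \ref{t:55}.
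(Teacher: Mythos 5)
Your proof is correct and follows essentially the same route as the paper: expand the inner product, use Properties \ref{p:2.1}(1),(2),(5) with $|v|^2\le K$, and apply a weighted Young's inequality tuned so that the $\frac{1}{\eta}|P\delta|^2$ terms cancel (the paper writes this with a free parameter $\theta$ and then sets $\theta=\eta K/2$, which is the same choice of weights you make directly). No gaps.
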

\begin{proof}

On expanding the inner product   
\begin{equation}
\langle A\delta+2B(v,\delta)+B(\delta,\delta)+\frac{1}{\eta}P\delta,\delta\rangle=\langle A\delta,\delta\rangle+2\langle B(v,\delta),\delta\rangle+\langle B(\delta,\delta),\delta\rangle+\langle \eta^{-1}P\delta,\delta\rangle.
\end{equation}
We now use the Properties \ref{p:2.1}(1),(5) and the fact that true solution lies on the global attractor so that $|v|\le K$.
As a consequence we obtain
\begin{equation}
\langle A\delta+2B(v,\delta)+B(\delta,\delta)+\frac{1}{\eta}P\delta,\delta \rangle \ge {|\delta|}^2-K^{\frac{1}{2}}|\delta||P\delta|+\frac{1}{\eta}{|P\delta|}^2.
\end{equation}
Using Young's inequality with parameter $\theta$
\begin{equation}
\langle A\delta+2B(v,\delta)+B(\delta,\delta)+\frac{1}{\eta}P\delta,\delta \rangle\ge {|\delta|}^2-\frac{1}{2\theta}K{|P\delta|}^2-\frac{\theta}{2}{|\delta|}^2+\frac{1}{\eta}{|P\delta|}^2.
\end{equation}
Taking $\theta=\frac{\eta K}{2}$ yields the desired result
\begin{equation}
\langle A\delta+2B(v,\delta)+B(\delta,\delta)+\frac{1}{\eta}P\delta,\delta\rangle\ge\left(1-\frac{\eta K}{4}\right){|\delta|}^2.
\end{equation}

\end{proof}
\section{Numerical Results}
\label{sec:num}

In this section we present numerical results illustrating 
Theorems \ref{91} and \ref{t:55}
established in the two preceding sections. 
All experiments are conducted with
the parameters $(\alpha,b,r)=(10,\frac{8}{3},28)$. 
Both the theorems are mean square results.
{However, some of our numerics are based on a single 
long-time realization of the filters in question, with time-averaging used
in place of ensemble averaging when mean square results
are displayed; we highlight when this is done.}

\subsection{Discrete case}
Under the assumptions of Theorem \ref{91} we expect the 
mean square error in {$\delta=|v-m|$} to decrease exponentially 
until it is of the size of the observational noise
squared. Hence we expect the estimate $m$ to converge to a 
neighbourhood of the true solution $v$, where the size 
of the neighbourhood scales as the size of the noise 
which pollutes in observation, in mean square. {The following
experiment indicates that similar behaviour is in fact
observed pathwise (Figure \ref{fig:fig2}), 
as well as in mean square over an ensemble (Figure \ref{fig:fig3}). 
We set up the numerical experiments by computing the true 
solution $v$ of the Lorenz equations using the
explicit Euler method, and then adding Gaussian random noise 
to the observed $x$-component to create the data.
Throughout we fix the parameter $\eta=0.1$. In
Figure \ref{fig:fig2}  the observational noise $\epsilon$
is fixed and in Figure \ref{fig:fig3} we vary it over a range
of scales.}

\begin {figure}
\begin{center}
\includegraphics[width=0.95\textwidth]{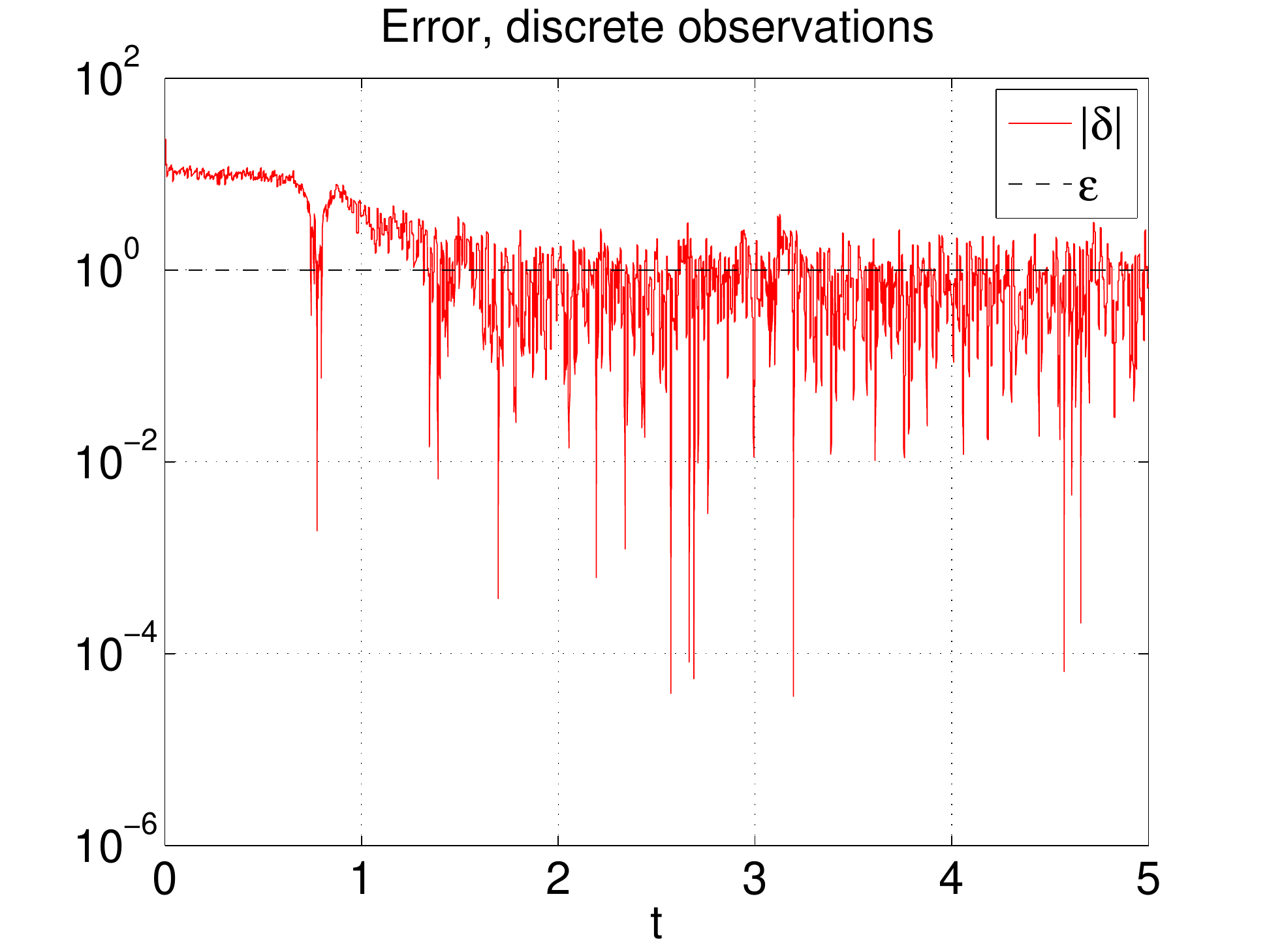} 
\end{center}
\caption{Decay of initial error from $\mathcal{O}(1)$ to
  $\mathcal{O}(\epsilon)$ for discrete observations, $\epsilon=1$, $\eta=0.1$}\label{fig:fig2}
\end {figure}

\begin {figure}
\begin{center}
\includegraphics[width=0.95\textwidth]{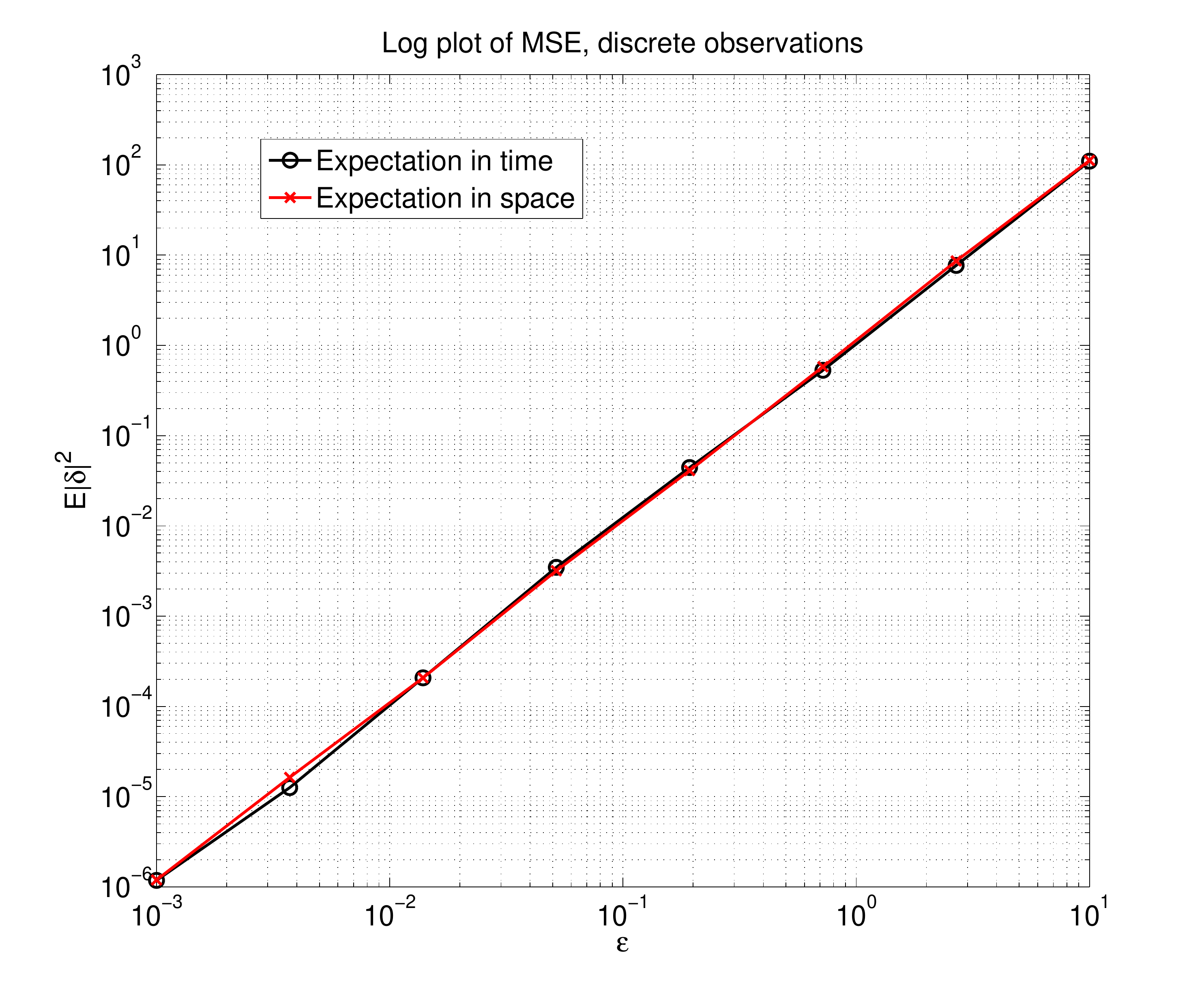} 
\end{center}
\caption{Log-linear dependence of asymptotic
  $\mathbb{E}{|\delta|^2}$ on $\epsilon$ for discrete
  observations, $\eta=0.1$.}\label{fig:fig3}
\end {figure}
Figure \ref{fig:fig2} {concerns the behaviour of a single
realization of the filter.} Note that the initial
error $|v(0)-m(0)|$ is 
around $\mathbb{E}|v| \approx 10$ and it  
decays exponentially with time, converging to
$\mathcal{O}(\epsilon)$;
 for this particular case we chose $\epsilon = 1.$ 
A consequence of the second part of Theorem \ref{91} is that the logarithm of the asymptotic mean squared error $\log\mathbb{E}{|\delta|^2}$ varies linearly with the logarithm of the standard deviation of noise in the observations $(\epsilon)$ {and this is illustrated
in Figure \ref{fig:fig3}.
To compute the asymptotic mean square error we take two approaches. 
In the first, for each $\epsilon$, we time-average the error incurred 
within a single long trajectory of the filter.  
In the second approach, we consider
spatial average over an ensemble of observational noises ${\nu}$, 
at a single time after the error has reached equilibrium.
In Figure \ref{fig:fig3}
we observe the log-linear decrease 
in the asymptotic error as the size of the noise decreases;
furthermore, the slope of the graph is approximately
$2$ as predicted by (\ref{58}). Both temporal and spatial
averaging deliver approximately the same slope.}

\subsection{Continuous case}
In the case of continuous observations we again compute
a true trajectory of the Lorenz equation using the
explicit Euler scheme. We then simulate the SDE  (\ref{55}) 
using the Euler-Maruyama method.\footnote{Note that this
is equivalent to creating the data $z$ from (\ref{41a})
and solving (\ref{37}) and, since we have access to the
truth, is computationally expedient.} {Similarly to the discrete case,
we consider both pathwise and ensemble illustrations 
of the mean square results in Theorem \ref{t:55}. 
Figures \ref{fig:fig4} and \ref{fig:fig6} concern
a single pathwise solution of (\ref{55}). Recall
from Theorem \ref{t:55} that the critical value of
$\eta$, beneath which the mean square theory holds,
is $\eta_{c}=4/K.$ In Figure \ref{fig:fig4} we have
$\eta=\frac12 \eta_c$ whilst in Figure \ref{fig:fig6} we
have $\eta=10 \eta_c;$ in both cases the pathwise error
spends most of its time at ${\cal O}(\epsilon)$, after
the initial transient is removed, suggesting that
the critical value of $\eta$ derived in Theorem \ref{t:55}
is not sharp. 
In Figure \ref{fig:fig5} we
vary the size of observational error $\epsilon$ and
take $\eta=\frac18 \eta_c.$
The initial error is expected to decay exponentially towards something
of order ${\cal \epsilon}$, and this is what is observed
in both the case where averaging is performed in time and in space. 
Indeed we observe the log-linear decrease 
in the asymptotic error as the 
size of the noise decreases, and the slope of the graph is 
approximately $2$, as predicted by equation (\ref{45}).}

\begin {figure}
\begin{center}
\includegraphics[width=0.95\textwidth]{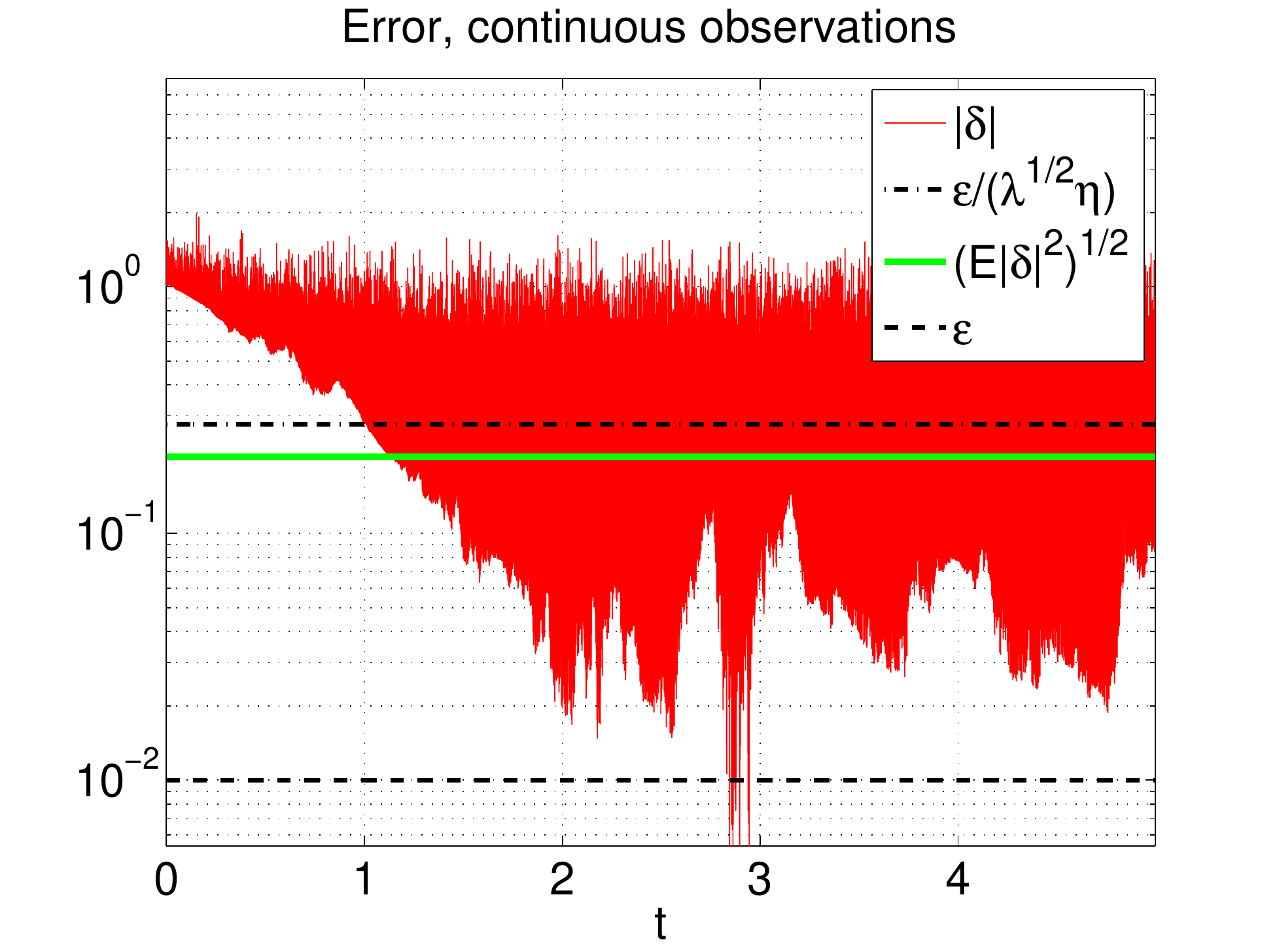} 
\end{center}
\caption{Decay of initial error from $\mathcal{O}(1)$ to
  $\mathcal{O}(\epsilon)$ for continuous observations, $\epsilon=0.01$.
Results are shown for $\eta=2/K<\eta_{c}$. }\label{fig:fig4}
\end {figure}

\begin {figure}
\begin{center}
\includegraphics[width=0.95\textwidth]{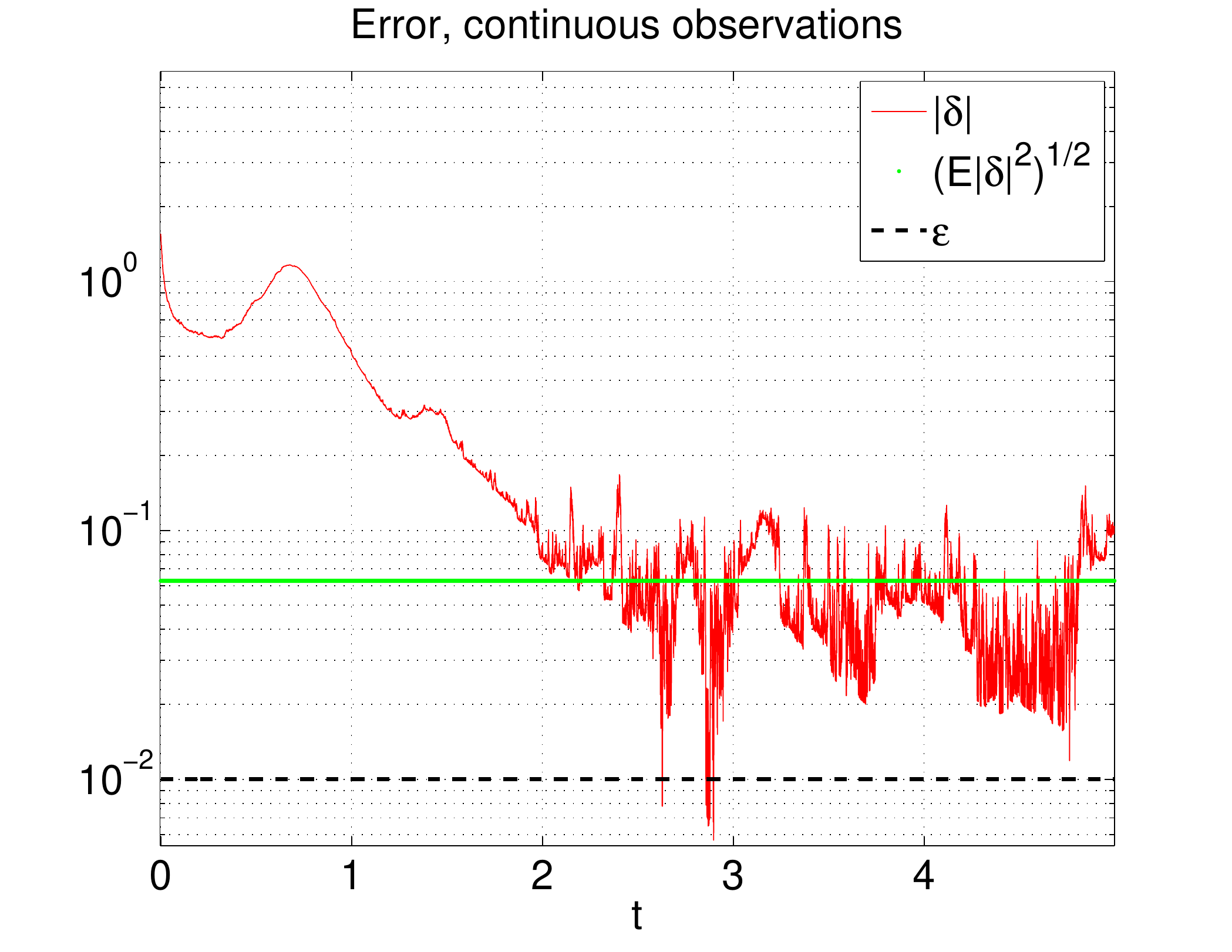} 
\end{center}
\caption{Decay of initial error from $\mathcal{O}(1)$ to
  $\mathcal{O}(\epsilon)$ for continuous observations, $\epsilon=0.01$.
Results are shown for $\eta=40/K=10\eta_{c}$. }\label{fig:fig6}
\end {figure}

\begin {figure}
\begin{center}
\includegraphics[width=0.95\textwidth]{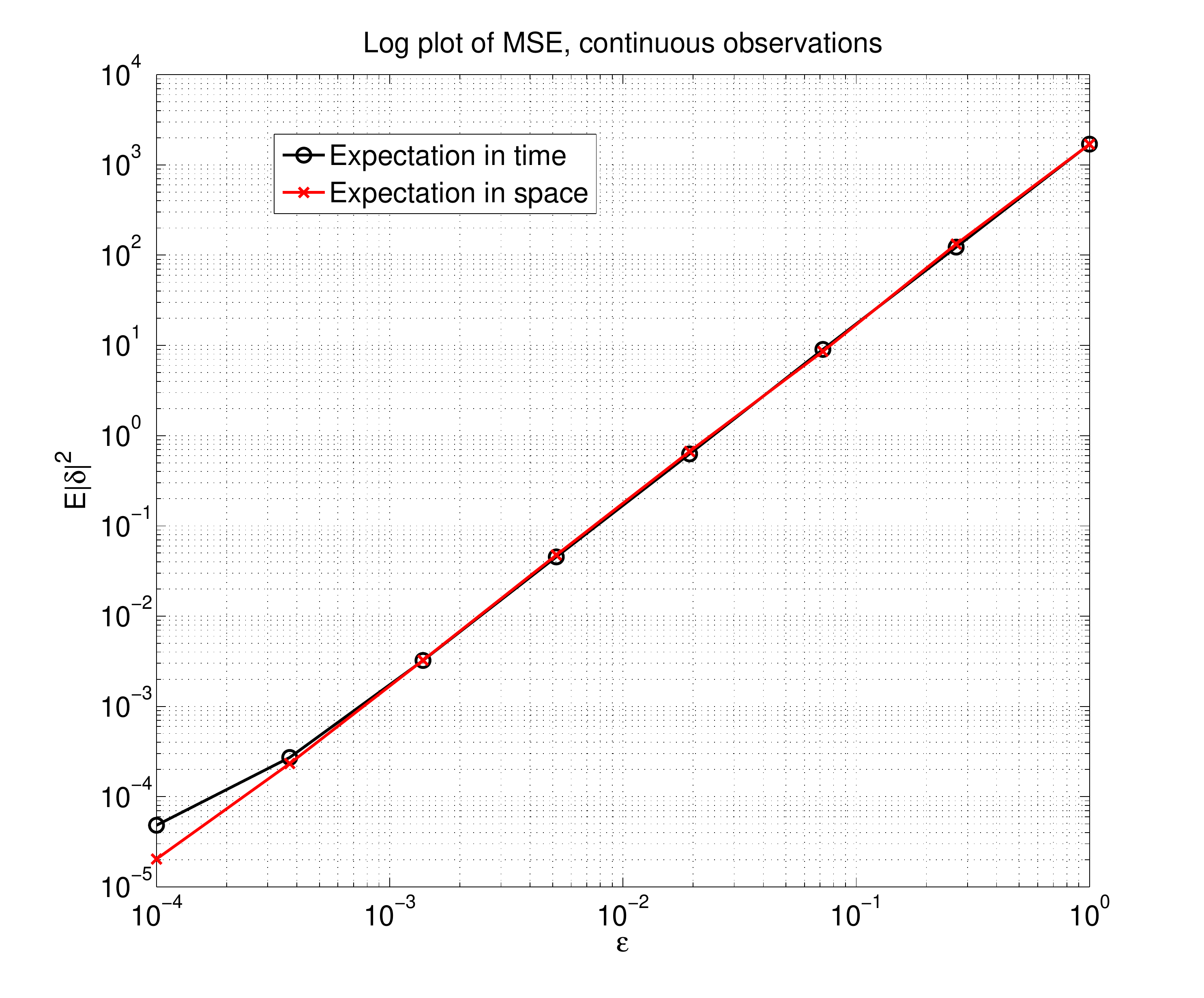} 
\end{center}
\caption{Log-linear dependence of asymptotic $\log\mathbb{E}{|\delta|^2}$
  on $\log \epsilon$ for continuous observations and $\eta=1/(2K)$.}
\label{fig:fig5}
\end {figure}

\section{Conclusions}
\label{sec:conc}

The study of approximate Gaussian filters for
the incorporation of data into high dimensional
dynamical systems provides a rich field for
applied mathematicians. Potentially such analysis
can shed light on algorithms currently in use,
whilst also suggesting methods for the improvement
of those algorithms. However, rigorous analysis
of these filters is in its infancy. The current
work demonstrates the properties of the 3DVAR
algorithm when applied to the partially observed
Lorenz '63 model; it is analogous to the
more involved theory developed for the 3DVAR
filter applied to the partially observed Navier-Stokes
equations in \cite{brett2012accuracy,blomker2012accuracy}. 
Work of this type can be built upon in
four primary directions: firstly to consider other
model dynamical systems of interest to practitioners,
such as the Lorenz '96 model \cite{lorenz1996predictability};
secondly to consider other observation models, such
as pointwise velocity field measurements or Lagrangian
data for the
Navier-Stokes equations, building on the theory of
determining modes \cite{jones1993upper};
{thirdly to consider the precise relationships required
between the model covariance $C$ and observation operator
$H$ to ensure accuracy of the filter}; 
and finally to consider more sophisticated filters
such as the extended \cite{jazwinski1970stochastic}
and ensemble \cite{evensen2003ensemble,evensen2009data}
Kalman filters.

{We are actively engaged in studying other models, such as
Lorenz '96, by similar techniques to those employed here;
our work on Lorenz '63 and Navier-Stokes models builds
heavily on the synchronization results of
Titi and coworkers and we believe that generalization of
synchronization properties is a key first step
in the study of other models. Regarding the second direction, Lagrangian
data introduces an additional auxiliary system for the observed
variables through which the system of interest is observed, 
necessitating careful design of correlations in the design parameters $C$, 
meaning that the analysis will be considerably more complicated
than for Eulerian data.
This links to the third direction: in general the relationship
between the model covariance and observation operator required to
obtain filter accuracy may be quite complicated and is an important
avenue for study in this field; even for the particular Lorenz '63 model
studied herein, with observation of only the $x$ component of
the system, this complexity is manifest if the covariance is not
diagonal. 
Relating to the fourth and final direction, it is worth noting that 
3DVAR is outdated operationally and 
empirical studies of filter accuracy have recently been focused 
on the more sophisticated methods such as 
ensemble Kalman filter and 4DVAR \cite{kalnay20084, lawstuart}.  
These empirical studies indicate that the more sophisticated methods 
outperform 3DVAR, as expected, and therefore suggest the importance
of rigorous analysis of those methods.}

\section*{Acknowledgement}
KJHL is supported by ESA and ERC.
AbS is supported by the EPSRC-MASDOC graduate
training scheme.
AMS is supported by ERC, EPSRC, ESA and ONR.
The authors are grateful to Daniel Sanz, Mike Scott
and Kostas Zygalakis for helpful comments 
on earlier versions of the manuscript.

\bibliographystyle{plain}
\bibliography{fsbib}

\begin{thebibliography}{10}

\bibitem{apte2008data}
A.~Apte, C.~Jones, AM~Stuart, and J.~Voss.
\newblock Data assimilation: Mathematical and statistical perspectives.
\newblock {\em International Journal for Numerical Methods in Fluids},
  56(8):1033--1046, 2008.

\bibitem{ben02}
A.~Bennett.
\newblock {\em Inverse Modeling of the ocean and Atmosphere}.
\newblock Cambridge, 2002.

\bibitem{reichbergman}
K.~Bergemann and S.~Reich.
\newblock An ensemble kalman-bucy filter for continuous data assimilation.
\newblock {\em Meteorolog. Zeitschrift}, 21:213--219, 2012.

\bibitem{blomker2012accuracy}
D.~Bl{\"o}mker, K.~Law, AM~Stuart, and KC~Zygalakis.
\newblock Accuracy and stability of the continuous-time 3dvar filter for the
  navier-stokes equation.
\newblock {\em arXiv preprint arXiv:1210.1594}, 2012.

\bibitem{brett2012accuracy}
CEA Brett, KF~Lam, KJH Law, DS~McCormick, MR~Scott, and AM~Stuart.
\newblock Accuracy and stability of filters for dissipative pdes.
\newblock {\em Physica D: Nonlinear Phenomena}, 245(1):34--45, 2013.

\bibitem{doucet2001sequential}
A.~Doucet, N.~De~Freitas, and N.~Gordon.
\newblock {\em Sequential Monte Carlo methods in practice}.
\newblock Springer Verlag, 2001.

\bibitem{evensen2003ensemble}
G.~Evensen.
\newblock The ensemble {K}alman filter: Theoretical formulation and practical
  implementation.
\newblock {\em Ocean dynamics}, 53(4):343--367, 2003.

\bibitem{evensen2009data}
G.~Evensen.
\newblock {\em Data {A}ssimilation: the {E}nsemble Kalman {F}ilter}.
\newblock Springer Verlag, 2009.

\bibitem{Titi2001}
C.~Foias, M.S. Jolly, L.~Kukavica, and E.S. Titi.
\newblock The lorenz equationas a metaphor for the navier-stokes equation,
  discrete and continous dynamical systems.
\newblock {\em Discrete and Continous Dynamical Systems}, 7(4):403--429, 2001.

\bibitem{foias1967comportement}
C.~Foias and G.~Prodi.
\newblock Sur le comportement global des solutions nonstationnaires des
  {\'e}quations de navier-stokes en dimension 2.
\newblock {\em Rend. Sem. Mat. Univ. Padova}, 39(1), 1967.

\bibitem{harvey1991forecasting}
A.C. Harvey.
\newblock {\em {Forecasting, Structural Time Series Models and the Kalman
  filter}}.
\newblock Cambridge Univ Pr, 1991.

\bibitem{hayden2011discrete}
K.~Hayden, E.~Olson, and E.S. Titi.
\newblock Discrete data assimilation in the lorenz and 2d navier--stokes
  equations.
\newblock {\em Physica D: Nonlinear Phenomena}, 240(18):1416--1425, 2011.

\bibitem{jazwinski1970stochastic}
A.H. Jazwinski.
\newblock {\em {Stochastic Processes and Filtering Theory}}.
\newblock Academic Pr, 1970.

\bibitem{jones1993upper}
D.A. Jones and E.S. Titi.
\newblock Upper bounds on the number of determining modes, nodes, and volume
  elements for the navier-stokes equations.
\newblock {\em Indiana University Mathematics Journal}, 42(3):875--888, 1993.

\bibitem{kalnay2003atmospheric}
E.~Kalnay.
\newblock {\em Atmospheric modeling, data assimilation, and predictability}.
\newblock Cambridge Univ Pr, 2003.

\bibitem{kalnay20084}
E.~Kalnay, H.~Li, T.~Miyoshi, S.-C. Yang, and J.~Ballabrera-Poy.
\newblock {4DVAR} or ensemble {K}alman filter?
\newblock {\em Tellus A}, 59(5):758--773, 2008.

\bibitem{lawstuart}
K.J.H. Law and A.M. Stuart.
\newblock Evaluating data assimilation algorithms.
\newblock {\em Monthly Weather Review}, 140(11):3757--3782, 2012.

\bibitem{article:Lorenc1986}
A.~C. Lorenc.
\newblock Analysis methods for numerical weather prediction.
\newblock {\em Quart. J. R. Met. Soc.}, 112(474):1177--1194, 2000.

\bibitem{lorenz1963deterministic}
E.N. Lorenz.
\newblock Deterministic nonperiodic flow.
\newblock {\em Atmos J Sci}, 20:130--141, 1963.

\bibitem{lorenz1996predictability}
E.N. Lorenz.
\newblock Predictability: A problem partly solved.
\newblock In {\em Proc. Seminar on Predictability}, volume~1, pages 1--18,
  1996.

\bibitem{majda2012filtering}
A.J. Majda and J.~Harlim.
\newblock {\em {Filtering Complex Turbulent Systems}}.
\newblock Cambridge University Press, 2012.

\bibitem{majda2010mathematical}
A.J. Majda, J.~Harlim, and B.~Gershgorin.
\newblock Mathematical strategies for filtering turbulent dynamical systems.
\newblock {\em Discrete and Continuous Dynamical Systems - Series A},
  27(2):441--486, 2010.

\bibitem{Mao97}
X.~Mao.
\newblock {\em Stochastic Differential Equations And Applications}.
\newblock Horwood, 1997.

\bibitem{oliver2008inverse}
D.S. Oliver, A.C. Reynolds, and N.~Liu.
\newblock {\em {Inverse Theory for Petroleum Reservoir Characterization and
  History Matching}}.
\newblock Cambridge University Press, 2008.

\bibitem{olson2003determining}
E.~Olson and E.S. Titi.
\newblock Determining modes for continuous data assimilation in {2D
  turbulence}.
\newblock {\em Journal of statistical physics}, 113(5):799--840, 2003.

\bibitem{article:Parrish1992}
D.~F. Parrish and J.~C. Derber.
\newblock The {N}ational {M}eteorological {C}enter's spectral
  statistical-interpolation analysis system.
\newblock {\em Monthly Weather Review}, 120(8):1747--1763, 1992.

\bibitem{book:Robinson2001}
J.~C. Robinson.
\newblock {\em Infinite-{D}imensional {D}ynamical {S}ystems}.
\newblock Cambridge Texts in Applied Mathematics. Cambridge University Press,
  Cambridge, 2001.

\bibitem{colin1982}
C.~Sparrow.
\newblock {\em The Lorenz Equations: Bifurcations, Chaos, and Strange
  Attractors.}
\newblock Springer, 1982.

\bibitem{article:Stuart2010}
A.~M. Stuart.
\newblock Inverse problems: a {B}ayesian perspective.
\newblock {\em Acta Numer.}, 19:451--559, 2010.

\bibitem{book:Temam1997}
R.~Temam.
\newblock {\em {Infinite-Dimensional Dynamical Systems in Mechanics and
  Physics}}, volume~68 of {\em Applied Mathematical Sciences}.
\newblock Springer-Verlag, New York, second edition, 1997.

\bibitem{WT2002}
W.~Tucker.
\newblock A rigorous ode solver and smale's 14th problem.
\newblock {\em Journal of Foundations of Computational Mathematics}, 2:53--117,
  2002.

\bibitem{VL09}
P.J. Van~Leeuwen.
\newblock Particle filtering in geophysical systems.
\newblock {\em Monthly Weather Review}, 137:4089--4114, 2009.

\end{thebibliography}

\end{document}